
\documentclass[twoside,leqno,twocolumn]{article}
\usepackage{ltexpprt}
\usepackage{hyperref}
\usepackage{url}
\usepackage[fleqn]{amsmath}
\usepackage{amsfonts}
\usepackage{algorithm}
\usepackage{algpseudocode}
\usepackage{multirow}
\usepackage{makecell}
\usepackage{amssymb}
\usepackage{graphicx}
\usepackage{diagbox}
\usepackage{verbatim}
\usepackage{mathtools}
\usepackage{xcolor}
\usepackage{etoolbox}

%\usepackage{cite}
% MATH SUMBOLS AND RELATED MACROS
\def\grad{\nabla}

  %{\mbox{\boldmath $\lambda$}}

%\def\S{{\boldsymbol{\Sigma}}}

\def\cB{\mathcal{B}}
\def\cC{\mathcal{C}}
\def\cD{\mathcal{D}}
\def\cE{\mathcal{E}}

\def\cG{\mathcal{G}}
\def\cH{\mathcal{H}}
\def\cI{\mathcal{I}}

\def\cL{\mathcal{L}}

\def\cN{\mathcal{N}}

\def\cU{\mathcal{U}}

\def\cX{\mathcal{X}}
\def\cY{\mathcal{Y}}
\def\cZ{\mathcal{Z}}

\def\smskip{\smallskip}

\def\texitem#1{\par\smskip\noindent\hangindent 25pt
               \hbox to 25pt {\hss #1 ~}\ignorespaces}

% mathematical constructs
\def\abs#1{\left|#1\right|}
\def\norm#1{\|#1\|}

% some traditional definitions that can be blamed on craig barratt
\newcommand{\BEAS}{\begin{eqnarray*}}
\newcommand{\EEAS}{\end{eqnarray*}}
\newcommand{\BEA}{\begin{eqnarray}}
\newcommand{\EEA}{\end{eqnarray}}
\newcommand{\BEQ}{\begin{eqnarray}}
\newcommand{\EEQ}{\end{eqnarray}}
\newcommand{\BIT}{\begin{itemize}}
\newcommand{\EIT}{\end{itemize}}
\newcommand{\BNUM}{\begin{enumerate}}
\newcommand{\ENUM}{\end{enumerate}}

% arash added these two
\newcommand{\BA}{\begin{array}}
\newcommand{\EA}{\end{array}}

% text abbrevs

% std math stuff
\newcommand{\reals}{\mathbb{R}}
\newcommand{\integers}{\mathbb{Z}}

% Re & Im
%\renewcommand{\Re}{\mathop{\bf Re}}
%\renewcommand{\Im}{\mathop{\bf Im}}

% lin alg stuff

%\newcommand{\Range}{\mbox{\textrm{range}}}

\newcommand{\Rank}{\mathop{\bf rank}}
\newcommand{\Tr}{\mathop{\bf Tr}}

\newcommand{\diag}{\mathop{\bf diag}}

% probability stuff

% convexity & optimization stuff

%\newcommand{\QED}{~~\rule[-1pt]{8pt}{8pt}}\def\qed{\QED}
%\newcommand{\approxleq}{\mathrel{\smash{\makebox[0pt][l]{\raisebox{-3.4pt}{\small$\sim$}}}{\raisebox{1.1pt}{$<$}}}}
\newcommand{\argmin}{\mathop{\rm argmin}}

%the following ones seem way too specialized to be here...

%generalized inequalities for cvx book

%\newcommand{\geqq}{\mathrel{\succeq_{\cQ}}}

% colored text

\def\blue#1{\textcolor{blue}{#1}}

%% macros

\newif\ifpagenumbering
\pagenumberingtrue

\pagenumberingfalse

%
%  Conditionals for generating different versions
%
% \newif\ifabstract
% \newif\ifjournal
% \newcommand{\mnote}[1]{{\tiny\marginpar[#1]{#1}}}
%\newsavebox{\theorembox}
%\newsavebox{\lemmabox}
% \newsavebox{\claimbox}
% \newsavebox{\factbox}
% \newsavebox{\defnbox}
% \newsavebox{\corollarybox}
% \newsavebox{\propositionbox}
% \newsavebox{\examplebox}
% \newsavebox{\exercisebox}
% \newsavebox{\problembox}
% \newsavebox{\conjecturebox}
% \newsavebox{\oqbox}
%\newsavebox{\remarkbox} 
\newsavebox{\assbox}
%\savebox{\theorembox}{\noindent\bf Theorem}
%\savebox{\lemmabox}{\noindent\bf Lemma}
% \savebox{\claimbox}{\noindent\bf Claim}
% \savebox{\factbox}{\noindent\bf Fact}
% \savebox{\defnbox}{\noindent\bf Definition}
% \savebox{\corollarybox}{\noindent\bf Corollary}
% \savebox{\propositionbox}{\noindent\bf Proposition}
% \savebox{\examplebox}{\noindent\bf Example}
% \savebox{\exercisebox}{\noindent\bf Exercise}
% \savebox{\problembox}{\noindent\bf Problem}
% \savebox{\conjecturebox}{\noindent\bf Conjecture}
%\savebox{\remarkbox}{\noindent\bf Remark}
% \savebox{\oqbox}{\noindent\bf Open Question}
\savebox{\assbox}{\noindent\bf Assumption}
\newtheorem{assumption}{\usebox{\assbox}}
%\newtheorem{theorem}{\usebox{\theorembox}}
%\newtheorem{lemma}{\usebox{\lemmabox}}
% \newtheorem{claim}{\usebox{\claimbox}}
% \newtheorem{fact}{\usebox{\factbox}}
%\newtheorem{remark}{\usebox{\remarkbox}}[section]
% \newtheorem{corollary}{\usebox{\corollarybox}}
% \newtheorem{proposition}[theorem]{\usebox{\propositionbox}}
% \newtheorem{exercise}{\usebox{\exercisebox}}
% \newtheorem{example}{\usebox{\examplebox}}
% \newtheorem{problem}{\usebox{\problembox}}
% \newtheorem{conjecture}{\usebox{\conjecturebox}}
% \newtheorem{oq}{\usebox{\oqbox}}
% \newtheorem{defn}{\usebox{\defnbox}}

%\newtheorem{exprime}{Example}%[section]
%\newenvironment{example}{\noindent\exprime\em {\vspace{-0.05 in}}}{
%\hfill $\blackslug$ \par \addvspace{\bigskipamount}{\vspace{0in}}}
%% \newenvironment{example}{\noindent\exprime\em {\vspace{-0.05 in}}}{
%% \hfill $\blackslug$ \par \addvspace{\bigskipamount}{\vspace{0in}}}

%Miscellaneous commands for proofs

%\newenvironment{proof}{\par{\noi \bf Proof: }}{\(\qed\) \par}
% \newenvironment{proofsketch}{\par{\noi \bf Proof Sketch:}}{\(\qed\) \par}
% \newenvironment{solutionsketch}{\par{\bf Solution Sketch:}}{\(\qed\) \par}
% \newenvironment{defproof}[1]{\par{\bf Proof #1:}}{\(\qed\) \par}
% \newcommand{\case}[1]{{\small \sc Case #1:}}
% \newcommand{\subcase}[1]{{\em Case #1:}\/}

% \renewcommand{\hat}[1]{\widehat{#1}}
% \newcommand{\opt}{\mbox{\footnotesize\sc opt}}
% \newcommand{\parent}{\mbox{\sc parent}}
% \newcommand{\precstar}{\stackrel{\mbox{\raisebox{-0.5ex}[0pt][0pt]{\tiny$\star$}}}{\prec}}

%%%%%%%%%%%%%%%%%%%%%%%%%%%%%%%%%%%%%%%%%%%%%%%%%%%%%%%%%%%%%%%%%

\patchcmd{\thebibliography}{\section*{\refname}}{\subsection*{\refname}}{}{}
\def\proj#1{\pi_\Omega\left(#1\right)}

\def\fprod#1{\langle#1 \rangle}
\newcommand{\admip}{\texttt{ADMIPC}}
\newcommand{\admm}{\texttt{ADMM}}
\newcommand{\rpcab}{\texttt{RPCAB}}
\newcommand{\mialm}{\texttt{M-IALM}}
\newcommand{\sgn}{\mathrm{sgn}}

\begin{document}

\title{\vspace{-0.2cm}\Large An ADMM Algorithm for Clustering Partially Observed Networks
\vspace{-0.15cm}\thanks{\vspace{-0.1cm}For all questions please contact the first author. Partially supported by NSF grant CMMI-1400217.}}
%Supported by GSF grants ABC123, DEF456, and GHI789.}}
\author{N. S. Aybat\thanks{IME Dept. Penn State University. \texttt{Email:}nsa10@psu.edu.} \\
\and
S. Zarmehri\thanks{IME Dept. Penn State University. \texttt{Email:}sxz155@psu.edu.}\\
\and
S. Kumara\thanks{IME Dept. Penn State University. \texttt{Email:}skumara@psu.edu.}
}
\date{}
\maketitle
%\pagenumbering{arabic}
%\setcounter{page}{1}%Leave this line commented out.

\begin{abstract} \small\baselineskip=9pt Community detection has attracted increasing attention during the past decade, and many algorithms have been proposed to find the underlying community structure in a given network. Many of these algorithms are based on modularity maximization, and these methods suffer from the resolution limit. In order to detect the underlying cluster structure, we propose a new convex formulation %which minimizes the sum of missing edges in one community and existing edges between communities. This method is
to decompose a partially observed adjacency matrix of a network into low-rank and sparse components. In such decomposition, the low-rank component encodes the cluster structure under certain assumptions. We also devise an alternating direction method of multipliers with increasing penalty sequence to solve this problem; %One of the most interesting aspects of this method is its ability to find the community structure when all the data is not available.
 and compare it with \texttt{Louvain} method, which maximizes the modularity, on some synthetic randomly generated networks. Numerical results show that our method %works very well and
 outperforms \texttt{Louvain} method on the randomly generated networks when variance among cluster sizes increases. Moreover, empirical results also demonstrate that our formulation is indeed tighter than the robust PCA formulation, and is able to find the true clustering when the robust PCA formulation fails. %most of the time.
\end{abstract}
\section{Introduction}
\label{sec:1}
Community detection or clustering is one of the most important topics in network science~\cite{Newman04_EPJ}. A cluster is defined loosely as a group of nodes which are more densely connected with each other than with nodes in the other groups of the network. Many clustering algorithms have been proposed to identify the underlying community structure in a given network. The goodness of the identified communities can be evaluated by a quality function~\cite{Santo10_Phy}. Modularity is the most popular quality function~\cite{Santo10_Phy} which was introduced by Girvan and Newman~\cite{NewGir04_phyr}. It is assumed that a higher modularity value indicates a better community structure. Although this is not always true, it has formed the motivation for developing many algorithms based on modularity maximization~\cite{Santo10_Phy}. In particular, given a partition of nodes, modularity is the sum of values, each corresponding to a group in the partition. Hence, in modularity maximization, one searches for the best partition, which is equivalent to looking for the best trade-off between the number of groups in the partition and their corresponding values; therefore, modularity maximization is an NP-complete problem~\cite{Brandes08_IEEE} and the algorithms are only able to find a good approximation to the global solution. Recently, it has been shown that modularity maximization has some problems for large networks~\cite{Santo12_phyr}. One major problem is due to \emph{resolution limit}~\cite{Santo07_pre}. Many modularity based algorithms tend to merge smaller clusters with bigger ones even when the small size cluster is a clique, and it is connected to a larger cluster by a single edge~\cite{Santo07_pre,Santo12_phyr}. This problem arises from the definition of modularity and particularly from the assumption of its null model that each node can interact with any other node in the network~\cite{Santo10_Phy}. If there are two communities with sufficiently small sizes (and hence small degrees), the expected number of edges between them for the null model is small. In this case, even the existence of a single edge between the two communities can merge them together~\cite{Santo10_Phy}. Moreover, as discussed in~\cite{Santo07_pre}, the partition corresponding to the highest modularity may not be correlated with the underlying unknown community structure. Indeed, there are some instances of real networks~\cite{Santo07_pre} and benchmark graphs~\cite{Santo12_phyr} such that the modularity maximization fails to properly identify the community structure. Recently, it has been shown that the modularity maximization problem can have different local maxima which are structurally different but have high modularity values~\cite{Good10_phy}. These solutions may disagree on many community structure properties such as the distribution of cluster sizes. This kind of disagreement may have serious impact on real world networks such as metabolic networks~\cite{Good10_phy}.

The need to find an accurate community structure motivated us to develop a new model, which depends more on the network structure and less on the quality function, together with an algorithm to solve it for community detection. Our method is based on convex optimization, and is inspired by the work in~\cite{Can09_1J}. Suppose we have a data matrix $D\in\reals^{m\times n}$ which is a summation of a \emph{low rank} matrix $\bar{L}$ and a \emph{sparse} matrix $\bar{S}$, i.e., $D=\bar{L}+\bar{S}$. Consider the following convex optimization problem:%\vspace{-0.35cm}
\begin{equation}
\label{eq:rpca}
%\begin{aligned}
(L_{\rho}^*, S_{\rho}^*) \in\argmin_{L,S\in\reals^{m\times n}}\{ \norm{L}_{*} +  \rho \norm{S}_1:~ D = L+S \},
%\end{aligned}
\end{equation}
\newpage
\noindent where $\norm{Z}_{*}:=\sum_{i=1}^{\Rank(Z)}\sigma_i(Z)$ denotes the nuclear norm of $Z\in\reals^{m\times n}$, i.e., sum of singular values of its argument, and $\norm{Z}_1=\sum_{i=1}^m\sum_{j=1}^n|Z_{ij}|$. It has been shown in~\cite{Can09_1J} that under some technical conditions on $\bar{L}$ and $\bar{S}$, problem in \eqref{eq:rpca} has a unique solution $(L^*_{\rho},S^*_{\rho})$ such that $(L^*_{\rho},S^*_{\rho})=(\bar{L},\bar{S})$ with very high probability for $\rho=1/\sqrt{\max\{m,n\}}$.

Suppose we are given an undirected network $\cG=(\cN,\cE)$, where $\cN=\{1,\ldots,n\}$ and $\cE\subset\cN\times\cN$ denote the set of nodes and edges, respectively. Suppose there are $r\ll n$ communities in $\cG$, and $\cN_\ell\subset\cN$ denotes the subset of nodes in community-$\ell$ for $1\leq\ell\leq r$. We assume that every node belongs to \emph{exactly one} community, i.e., $\bigcup_{\ell=1}^r\cN_\ell=\cN$ and $\cN_{\ell_1}\cap\cN_{\ell_2}=\emptyset$ for all $\ell_1\neq\ell_2$. Let $D\in\reals^{n\times n}$ denote the node-node incidence matrix of $\cG$ such that $D_{ii}=1$ for all $i\in\cN$, $D_{ij}=1$ if either $(i,j)\in\cE$ or $(j,i)\in\cE$, and $D_{ij}=0$ otherwise. Our idea is to decompose $D$ into a low rank matrix and a sparse matrix to recover the underlying community structure in $\cG$. Here we discuss that such decomposition is feasible under the assumption that the number of node pairs \emph{not connected} by an edge in each cluster and the number of edges connecting two different clusters are both \emph{small} for the underlying community structure in $\cG$. To motivate the upcoming discussion, first consider the scenario where the subgraph of $\cG$ restricted to $\cN_\ell$ is a clique for all $\ell=1,\cdots,r$, and there is no inter-community edge in $\cE$, i.e., $\cN_{\ell_1}\times\cN_{\ell_2}\subset \cE^c$ for all $\ell_1\neq\ell_2$ and $\cE^c$ denotes the complement of $\cE$ in $\cN\times\cN$. Clearly, $D$ is a block diagonal matrix with each block on the diagonal consisting of all ones and the off-diagonal blocks consisting of all zeros -- from now on we refer to such matrices as block diagonal matrix of ones~(BDO). Note that $D$ is a low-rank matrix such that $\Rank(D)=r$ and $\lambda_\ell(D)=|\cN_\ell|$ for all $1\leq\ell\leq r$, where $\{\lambda_\ell(D)\}_{\ell=1}^r$ denotes the non-zero eigenvalues of $D$. Hence, $D=\bar{L}+\bar{S}$ such that low-rank component $\bar{L}=D$ and sparse component $\bar{S}=\mathbf{0}_n$, where $\mathbf{0}_n\in\reals^{n\times n}$ is the matrix of zeros.

Now consider a more realistic scenario where for any $\ell\in\{1,\ldots,r\}$, a \emph{small} number of node pairs from  $\cN_{\ell}$ may not be connected by an edge, and for any $\ell_1\neq\ell_2$ there may be a \emph{small} number of edges with one end in $\cN_{\ell_1}$ and the other in $\cN_{\ell_2}$, i.e., the clusters may not be cliques, and there can be inter-cluster edges. Given $\cG$ with a non-overlapping community structure $\{N_\ell\}_{\ell=1}^r$, we define $\bar{L}=D-\bar{S}$ and $\bar{S}$ such that
\begin{equation}
\label{eq:Sbar}
\bar{S}_{ij}=\left\{
               \begin{array}{ll}
                 -1, &  \begin{array}{l}
                          \hbox{if $(i,j)\not\in\cE$, $(j,i)\not\in\cE$, and} \\
                          \hbox{$\exists \ell$ s.t. $i,j\in\cN_\ell$;}
                        \end{array}\\
                 1, & \begin{array}{l}
                          \hbox{if $(i,j)\in\cE$ or $(j,i)\in\cE$, and} \\
                          \hbox{$\exists\ell_1\neq\ell_2$ s.t. $i\in\cN_{\ell_1}$, $j\in\cN_{\ell_2}$;}
                        \end{array}\\
                 0, & \hbox{   otherwise.}
               \end{array}
             \right.
\end{equation}
 Clearly, $\bar{S}$ defined in \eqref{eq:Sbar} is \emph{sparse}, due to our assumption on the underlying community structure in $\cG$, and $\bar{L}=D-\bar{S}$ is \emph{low-rank}. Indeed, $\bar{L}$ is a BDO obtained by completing the clusters into cliques and deleting the inter-cluster edges; therefore, $\bar{L}$ is low rank because each block has rank one and $\Rank(\bar{L})$ is equal to the number of diagonal blocks, i.e., $\Rank(\bar{L})=r$. %Moreover, $\bar{S}$ is sparse .

In this paper, we propose a convex model similar to \eqref{eq:rpca} of which optimal solution is equal to $(\bar{L},\bar{S})$, defined as in \eqref{eq:Sbar}, with very high probability. In particular, by adding constraints $L\succeq 0$, $\diag(L)=\mathbf{1}$, $L \geq 0$, and $|S_{ij}| \leq 1$ for $1\leq i \neq j \leq n$ to \eqref{eq:rpca}, we will obtain a tighter convex model. Another important property of our model is its ability to handle cases where $D$ is partially observed.
We will discuss this property in more detail in the next section, and develop an alternating direction method of multipliers~(\admm) algorithm for the proposed model.
Finally, in Section~\ref{sec:numerical}, we first discuss how to generate a random family of networks for which modularity maximization fails, then compare ours with \texttt{Louvain} method~\cite{Louvain08_stat}, which is a greedy algorithm to solve the modularity maximization problem. While we were working on this idea independently, we found out a similar work by Chen et al.~\cite{Jalali14}, which is also based on decomposition of the adjacency matrix %into low-rank and sparse components
by solving \eqref{eq:rpca} when $D$ is partially observed. In the next section, we will discuss the similarities among the two methods, and emphasize the advantages of our method over the one in~\cite{Jalali14}; and compare both methods in Section~\ref{sec:numerical}. Numerical results show that our model is indeed tighter than the robust PCA formulation, and is able to find the true clustering almost every time, while both \texttt{Louvain} method and the one in~\cite{Jalali14} fail when the variance among cluster sizes increases. The MATLAB code is available at \url{http://www2.ie.psu.edu/aybat/codes.html}.
\section{Methodology}
\subsection{Theoretical results}
%In this part, we explain the theoretical results of our model.
Model~\eqref{eq:rpca} was proposed by Cand\`{e}s et al.~\cite{Can09_1J}, and it is shown under some technical conditions on the components of $D$ that its solution recovers the low rank and sparse components of the data matrix exactly with high probability - see~\cite{Can09_1J} for more details. %To handle model \eqref{eq:rpca}
When $D$ is \emph{partially} observed, %all the data is not available, to decompose,
Tao and Yuan~\cite{TaoY11_SIAM} proposed the following model:
\begin{equation}
\label{eq:rpca_p}
%\begin{aligned}
\min_{L,S\in\reals^{n\times n}}\{\norm{L}_{*} +  \rho \norm{S}_1:~\pi_{\Omega}(L+S)=\pi_{\Omega} (D) \},
%\end{aligned}
\end{equation}
where $\Omega \subset \{(i,j): 1\leq i,j\leq n \}$ is the set of observable indices and $\pi_{\Omega}$ is the projection operator. For an arbitrary matrix $Z$, $(\pi _{\Omega}(Z))_{ij} = Z_{ij}$ when $(i,j) \in \Omega$ and $(\pi _{\Omega}(Z))_{ij} = 0$ otherwise. This model inspired us to develop a new method for network clustering based on convex optimization.

In order to compute a clustering for a partially observed network $\cG$, Chen et al.~\cite{Jalali14} proposed to solve \eqref{eq:rpca_p} repeatedly for different values of $\rho$, where $D$ is the node-node incidence matrix of $\cG$ with a diagonal of ones. In particular, the authors of~\cite{Jalali14} proposed doing bisection on $\rho$ until $L^*_\rho$ is a BDO, which they called a \emph{valid result}.
%, where $(L^*_\rho,S^*_\rho)$ denotes an optimal solution of \eqref{eq:rpca} depending on $\rho>0$.
The main advantage of the proposed method in this paper over the one in~\cite{Jalali14} is that we solve a \emph{tighter} convex problem \emph{one time} with the same complexity of solving \eqref{eq:rpca}, while in~\cite{Jalali14} the authors propose to solve \eqref{eq:rpca_p} repeatedly for different values of $\rho$. Moreover, our numerical results show that our method is not only better in computation time, but also in clustering quality. Indeed, when the network size and/or the variance among cluster sizes increase, the method in~\cite{Jalali14} fails to cluster correctly while ours always succeeds.
\begin{assumption}
\label{Assum:1}
Let $\cG=(\cN,\cE)$ be an undirected network with $\cN=\{1,\ldots,n\}$, and $\cE\subset\cN\times\cN$. Suppose that $\{\cN_\ell\}_{\ell=1}^r$ be a partition of $\cN$ representing the non-overlapping communities in $\cG$. Let $\Omega:=\bar{\Omega}\cup\cD$ denote the set of observable entries of the adjacency matrix $D$ such that $\diag(D)=\mathbf{1}$, where $\cD=\{(i,i):~i\in\cN\}$, and $\bar{\Omega}\subset\cN\times\cN\setminus \cD$. Assume that an edge exists between two nodes in the same cluster with probability $p$, and it exists between two nodes from two different clusters with probability $q$ such that $p \gg q$; and for any two nodes whether there is an edge between them or not is known with probability $p_0$. %whether an edge is in $\cE$ or not. %any off-diagonal entry of $D$ is observed with probability $p_0$.
\end{assumption}
In this paper, we propose an efficient method that can recover the underlying community structure of $\cG$ by decomposing $D$ into $(\bar{L},\bar{S})$ defined as in \eqref{eq:Sbar} with very high probability under Assumption~\ref{Assum:1}. In such decomposition, the community structure is encoded in $\bar{L}$, and the off-diagonal non-zero entries of $\bar{S}$ correspond to the node pairs that are in the same cluster but not connected by any edge, or to the edges connecting two different clusters. Following~\cite{Jalali14}, the total number of \emph{off-diagonal non-zeros} in $\bar{S}$, denoted by $\norm{\bar{S}}_{0}$, will be called total number of \emph{disagreements}. Before we introduce our model, we investigate some properties of $(\bar{L}, \bar{S})$. %The proposed model in this paper is the same as model \eqref{eq:rpca_p} with three more constraints:}
%As mentioned before, our objective is to minimize the total number of missing edges in each cluster and existing edges between clusters. Let us call this number  the total number of disagreements. Our model is similar to model \eqref{eq:rpca_p} but we add \blue{the two constraints mentioned in section 1}, which explain the properties of our problem better:
\begin{comment}
\begin{eqnarray}
\label{eq:rpca_t}
\begin{aligned}
\min_{L,S\in\reals^{n\times n}}  \{ & \norm{L}_{*} +  \rho \norm{S}_1:~\pi_{\Omega} (D) = \pi_{\Omega}(L+S),~\blue{\diag(L) = 1},~L \succeq 0,~ \blue{L \geq 0,}\\
& \blue{-1\leq S_{ij} \leq 1 \quad \forall ~ i \neq j} \}
\end{aligned}
\end{eqnarray}
\end{comment}
%\blue{where $\diag(Z)$ represents the diagonal elements of $Z$. The second constraint makes sure that every node belongs to at least one cluster. The third constraint makes sure that the low rank matrix is positive semidefinite. Since $L$ represents the community structure, its elements should be positive. The forth constraint is added to take care of this property. The fifth constraint represents that the elements of $S$ can only take values between -1 and 1.}
\begin{lemma}
\label{Lem:1}
Given $\cG=(\cN,\cE)$, and $\Omega\subset\cN\times\cN$, define $\chi := \{ (L,S) \in \mathbb{S}_n \times \mathbb{S}_n:~\pi_{\Omega}(L+S)=\pi_{\Omega}(D),~|S_{ij}| \leq 1 \quad \forall ~ 1\leq i \neq j\leq n,~\diag(L) = \mathbf{1},~ L \geq \mathbf{0}_n,~L\succeq \mathbf{0}_n\}$, where $\mathbb{S}_n$ denotes the subspace of $n\times n$ symmetric real matrices. Under Assumption~\ref{Assum:1}, we have $(\bar{L},\bar{S}) \in \chi$.
\end{lemma}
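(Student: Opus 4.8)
The plan is to verify directly that the pair $(\bar L,\bar S)$ given by \eqref{eq:Sbar} together with $\bar L = D-\bar S$ satisfies each of the constraints defining $\chi$, taking them one at a time. The linchpin is the observation, already recorded in the text, that $\bar L$ is the block diagonal matrix of ones (BDO) associated with the partition $\{\cN_\ell\}_{\ell=1}^r$; once this structural fact is nailed down, every remaining constraint follows almost immediately. It is worth noting up front that the membership $(\bar L,\bar S)\in\chi$ is in fact deterministic: it holds for any graph carrying the partition $\{\cN_\ell\}$ and any observability set $\Omega$, so Assumption~\ref{Assum:1} enters only to fix the data $D$, the partition, and $\Omega$, not through its probabilistic content.

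First I would pin down the entries of $\bar L$ by a four-way case analysis on the off-diagonal pairs $i\neq j$, splitting on whether $i,j$ lie in a common cluster and on whether the edge $\{i,j\}$ is present. Matching each case against the three branches of \eqref{eq:Sbar} and using $D_{ij}\in\{0,1\}$, one finds $\bar L_{ij}=D_{ij}-\bar S_{ij}=1$ exactly when $i$ and $j$ share a cluster, and $\bar L_{ij}=0$ otherwise; for instance, a missing within-cluster edge gives $D_{ij}=0$ and $\bar S_{ij}=-1$, hence $\bar L_{ij}=1$, while a deleted inter-cluster edge gives $D_{ij}=1$ and $\bar S_{ij}=1$, hence $\bar L_{ij}=0$. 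The diagonal entries, handled by the convention $\diag(D)=\mathbf 1$ with $\bar S_{ii}=0$, give $\bar L_{ii}=1$. Thus $\bar L_{ij}=1$ iff $i,j$ lie in the same cluster, which is precisely the BDO whose $\ell$-th diagonal block is the all-ones matrix on $\cN_\ell$.

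With the BDO structure in hand the constraints fall out as follows. Symmetry of $\bar S$ is immediate from the symmetry of the defining conditions in \eqref{eq:Sbar} under $i\leftrightarrow j$, and since $D$ is symmetric, $\bar L=D-\bar S\in\mathbb{S}_n$; hence $(\bar L,\bar S)\in\mathbb{S}_n\times\mathbb{S}_n$. Because $\bar L=D-\bar S$ forces $\bar L+\bar S=D$ entrywise, we trivially obtain $\pi_{\Omega}(\bar L+\bar S)=\pi_{\Omega}(D)$ on $\Omega$ (indeed on all of $\cN\times\cN$). The bound $|\bar S_{ij}|\le 1$ for $i\neq j$ holds since $\bar S_{ij}\in\{-1,0,1\}$ by construction, and both $\diag(\bar L)=\mathbf 1$ and $\bar L\geq \mathbf 0_n$ are read off directly from the $0/1$ BDO entries.

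The one constraint deserving slightly more care—the step I would flag as the main point, although it too is routine—is positive semidefiniteness $\bar L\succeq \mathbf 0_n$. Here I would invoke the BDO structure: $\bar L$ is block diagonal with $\ell$-th block equal to $\mathbf 1\mathbf 1^\top$ of order $|\cN_\ell|$, each such rank-one block being PSD with eigenvalues $|\cN_\ell|$ and $0$. Since a block diagonal matrix is PSD exactly when each of its diagonal blocks is PSD, we conclude $\bar L\succeq \mathbf 0_n$, which completes the verification that $(\bar L,\bar S)\in\chi$.
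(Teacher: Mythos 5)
Your proposal is correct and follows essentially the same route as the paper: both verify the first four constraints directly from \eqref{eq:Sbar} and $\bar L = D - \bar S$, identify $\bar L$ as the block diagonal matrix of ones associated with the partition, and deduce $\bar L \succeq \mathbf{0}_n$ from the rank-one all-ones blocks (the paper writes this as $\bar L = \sum_{\ell=1}^r v^\ell (v^\ell)^T$ with $v^\ell$ the indicator vector of $\cN_\ell$, which is the same observation as your block-by-block argument). Your explicit case analysis of the entries of $\bar L$ and your remark that the membership is deterministic are just more detailed versions of what the paper leaves implicit.
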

%The second constraint forces the sparse matrix to have a diagonal equals to zero. This constraint is equivalent to $L_{ii} = 1$ for $i = 1,...,n$.
\begin{proof}
%See Section~\ref{sec:proof0} in the supplementary pdf file.
Directly from the definition of $\bar{S}$ in \eqref{eq:Sbar}, and from the facts: $\bar{L}:=D-\bar{S}$, $\diag(D)=\mathbf{1}$, it follows that the first four constraints are satisfied at $(\bar{L},\bar{S})$. Therefore it is enough to show that $\bar{L}$ is positive semidefinite. Under Assumption~\ref{Assum:1}, $\bar{L}$ is BDO with $r$ blocks since $\{\cN _{\ell}\}_{\ell=1}^r$ is a partition of $\cN$. Now let $v^ \ell\in\reals^n$ be such that $v_j^ \ell = 1$ if $j \in \cN _ \ell$, and $v_j^\ell = 0$ otherwise. It is easy to show that $\bar{L} = \Sigma _{\ell=1} ^{r} v^\ell (v^\ell)^T$. Therefore, $\bar{L}\succeq \mathbf{0}_n$ such that $\Rank{\bar{L}}=r$.
\end{proof}
Based on Lemma~\ref{Lem:1}, we define our new model as:
\begin{eqnarray}
\label{eq:new_tr}
\begin{aligned}
(L^*, S^*) \in \argmin_{L,S\in\mathbb{S}_n} &\Tr(L) +  \rho \norm{S}_1\\
 \hbox{s.t.\ \ } & \pi_{\Omega}(L+S)=\pi_{\Omega} (D),\\
 & \diag{(S)} = \mathbf{0},~|S_{ij}| \leq 1 \quad \forall ~ i \neq j,\\
 & L \succeq \mathbf{0}_n,~L \geq \mathbf{0}_n.
\end{aligned}
\end{eqnarray}
Note that in~\eqref{eq:new_tr}, we replaced $\norm{L}_*$ in \eqref{eq:rpca_p} with $\Tr(L)$, and also replaced $\diag(L) = \mathbf{1}$ constraint in the definition of $\chi$ with $\diag(S) = \mathbf{0}$ constraint. The first follows from the fact that $L\succeq\mathbf{0}_n$ implies $\norm{L}_*=\Tr(L)$. Indeed, for a positive semidefinte matrix $L$, its non-zero singular values $\{\sigma_i\}_{i=1}^{\Rank(L)}$ are equal to its non-zero eigenvalues $\{\lambda_i\}_{i=1}^{\Rank(L)}$. Therefore, we have $\norm{L}_* = \sum _{i=1}^{n}\sigma_i = \sum _{i=1}^{n}\lambda_i=\Tr(L)$.
%\begin{eqnarray}
%\label{eq:eig}
%\norm{\bar{L}}_* = \sum _{i=1}^{n}\bar{\sigma} _i = \sum _{i=1}^{n}\bar{\lambda} _i
%\end{eqnarray}
%where $\{\bar{\sigma}_i\}_{i=1}^r$ and $\{\bar{\lambda}_i\}_{i=1}^r$ represent the nonzero singular values and eigenvalues of $\bar{L}$, respectively.
Moreover, since $\cD\subset\Omega$ and $\diag(D)=\mathbf{1}$, $\diag(L)=\mathbf{1}$ if and only if $\diag(S) = \mathbf{0}$.
Note that replacing $\diag(L) = \mathbf{1}$ equivalently with $\diag(S) = \mathbf{0}$ is the key point for %that enabled us to
developing an \admm~algorithm with efficiently solvable subproblems, which will be discussed in the next section.

%From the positive semi definiteness of $L^*$ we know $\sum _{i=1}^{n}\lambda _i = Tr(L^*)$ where $Tr(L^*)$ is the trace of $L^*$. Therefore we can write an equivalent problem of model \eqref{eq:rpca_t} as follows:

%\blue{With this new formulation, the optimal solution satisfies the side constraints $-1 \leq S^*_{ij}$ and $S^*_{ij} \leq 1$ for all $i,j \in \{ 1,2,...,n \}$ such that $i \neq j$. This is trivial since for any feasible solution $(\tilde{L}, \tilde{S})$, $Tr(\tilde{L}) = n$. Therefore, our objective is to minimize $\norm{\tilde{S}}_1$. Since $D$ only contains 0 and 1, we can conclude that the optimal solution satisfies the side constraints.}

The following two theorems show the importance and special properties of our formulation. Theorem~\ref{Thm:1} and Theorem~\ref{Thm:2} were originally proved in~\cite{Jalali14} for model~\eqref{eq:rpca_p}. Since the feasible region of our model in~\eqref{eq:new_tr} is a subset of that in~\eqref{eq:rpca_p}, these important results trivially extend to our formulation as well.
\begin{theorem}
\label{Thm:1}
For any $\rho > 0$, if $L^*$ in \eqref{eq:new_tr} is a BDO, then it provides the optimal clustering in the sense that the total number of observed disagreements, i.e., $\norm{\pi_{\Omega}(S^*)}_0$, is minimized.
\end{theorem}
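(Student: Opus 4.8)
The plan is to exploit the fact that, restricted to the set of BDO matrices, the objective of \eqref{eq:new_tr} collapses to a constant plus a positive multiple of the number of observed disagreements, so that global optimality of a BDO $L^*$ forces its clustering to minimize these disagreements.

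First I would record two elementary facts about a BDO $L$ associated with an arbitrary partition $\{\cN'_\ell\}$ of $\cN$. Since every diagonal block of $L$ is an all-ones matrix, $\diag(L)=\mathbf{1}$ and hence $\Tr(L)=n$, independent of the partition. Moreover $L$ is symmetric, entrywise nonnegative, and $L\succeq\mathbf{0}_n$ by the rank-one decomposition argument used in the proof of Lemma~\ref{Lem:1}. Thus each partition yields a feasible point of \eqref{eq:new_tr} once we complete it with a suitable $S$: on $\Omega$ set $S_{ij}=D_{ij}-L_{ij}$ (forced by $\pi_\Omega(L+S)=\pi_\Omega(D)$, and giving $S_{ii}=0$ since $\cD\subset\Omega$), and on $\Omega^c$ set $S_{ij}=0$. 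Because both $D$ and $L$ have entries in $\{0,1\}$, every such $S_{ij}$ lies in $\{-1,0,1\}$, so $\diag(S)=\mathbf{0}$ and $|S_{ij}|\le 1$ hold, and $\norm{S}_1=\norm{\pi_\Omega(S)}_1=\norm{\pi_\Omega(S)}_0=:d(L)$, the number of observed disagreements of that partition. Hence the objective at this feasible point equals $n+\rho\, d(L)$.

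Next I would run the optimality argument. For the given optimum $(L^*,S^*)$ with $L^*$ a BDO, feasibility on $\Omega$ forces $\pi_\Omega(S^*)=\pi_\Omega(D-L^*)$ off the diagonal, so its nonzero entries are again $\pm 1$ and $\norm{\pi_\Omega(S^*)}_0=d(L^*)$; therefore $\norm{S^*}_1\ge\norm{\pi_\Omega(S^*)}_1=\norm{\pi_\Omega(S^*)}_0=d(L^*)$, giving the lower bound $\Tr(L^*)+\rho\norm{S^*}_1\ge n+\rho\, d(L^*)$. On the other hand, for any competing partition with BDO $L'$, the feasible point built above has objective $n+\rho\, d(L')$, so global optimality of $(L^*,S^*)$ yields $n+\rho\, d(L^*)\le \Tr(L^*)+\rho\norm{S^*}_1\le n+\rho\, d(L')$. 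Since $\rho>0$, this gives $d(L^*)\le d(L')$ for every partition, which is exactly the claim that $\norm{\pi_\Omega(S^*)}_0$ is minimized.

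The only point requiring care — and the main obstacle — is the chain of (in)equalities tying $\norm{\pi_\Omega(S^*)}_0$ to the objective value: one must check that on $\Omega$ the off-diagonal entries of $S^*$ are exactly $\pm 1$ at disagreements (so that $\norm{\cdot}_0$ and $\norm{\cdot}_1$ agree there) and that the diagonal contributes nothing, while any mass $S^*$ places on $\Omega^c$ can only inflate $\norm{S^*}_1$ and is harmless to the lower bound. Everything else is bookkeeping: verifying that an arbitrary partition genuinely produces a feasible pair (using symmetry of $\Omega$ and Lemma~\ref{Lem:1}) and that $\Tr$ is partition-independent on BDOs.
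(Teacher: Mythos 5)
Your proof is correct. The paper itself does not supply an argument here: it simply points to Theorem~2 of Chen et al.\ and asserts that the result ``trivially extends'' because the feasible region of \eqref{eq:new_tr} is contained in that of \eqref{eq:rpca_p}. Your write-up is the direct, self-contained version of that argument, and it makes explicit the two verifications that the paper's one-line remark actually depends on: (i) for \emph{every} partition of $\cN$, not just the ground-truth one, the associated BDO $L'$ together with $S'=\pi_\Omega(D-L')$ satisfies all of the \emph{additional} constraints of \eqref{eq:new_tr} ($L'\geq\mathbf{0}_n$, $L'\succeq\mathbf{0}_n$, $\diag(S')=\mathbf{0}$, $|S'_{ij}|\leq 1$), so the comparison class of feasible BDO pairs is unchanged by the tightening; and (ii) on that class the modified objective agrees with the original one, since $\Tr(L')=\norm{L'}_*=n$ for any BDO, so every such pair has objective value $n+\rho$ times its number of observed disagreements. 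With these two facts, your chain $n+\rho\,\norm{\pi_\Omega(S^*)}_0\leq\Tr(L^*)+\rho\norm{S^*}_1\leq n+\rho\,\norm{\pi_\Omega(S')}_0$ is exactly right, and the handling of the subtle points (diagonal entries contribute nothing because $\cD\subset\Omega$ and $\diag(D)=\diag(L')=\mathbf{1}$; entries of $S^*$ on $\Omega^c$ can only increase $\norm{S^*}_1$ and so do not spoil the lower bound; $\ell_0$ and $\ell_1$ coincide on $\pi_\Omega(S^*)$ because its entries lie in $\{-1,0,1\}$) is careful and complete. In short, you have reconstructed the argument the paper outsources, adapted correctly to the trace objective and the extra constraints; no gaps.
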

\begin{proof}
%Let $v_1$ represents the objective function value of model (6) and $v_2$ represents the objective function value of model (2). Let $L^* _2$ be the optimal solution of model (2). Model (4) is the tighter version of model (2) which means its feasible region is smaller than (2). Therefore its optimal objective function value is greater than or equal to $v_2(L^* _2)$. Since model (4) is equivalent to model (6), we have $v_2(L^* _2) \leq v_1(L^*)$. Now if $L^* _2$ is a BDO of ones, it is a valid solution based on the definition of validity in~\cite{Jalali14}. In this case $L^* _2$ satisfies all the constraints in (6) and hence feasible to (6). Therefore $L^* = L^*_2$ and $L^*$ is the valid optimal solution of (2). The rest of the proof comes from the proof of theorem 1 in~\cite{Jalali14}.
%From the constraints and the fact that the diagonal elements of $D$ are always observable, we have $Tr(L^*) = n$. If the solution is a BDO, then the elements of $S^*$ are only $0,1,$ or $-1$. It is obvious that $S^*_{ij} = 0$ for $(i,j) \in \Omega^c$. Thus when $L^*$ is a BDO, we have $\norm{S^*} _1 = \norm{S^*} _0$, which means the total number of disagreements is minimized.
See proof of Theorem~2 in~\cite{Jalali14}.
\end{proof}
%Let  Let We have the following theorem for model \eqref{eq:new_tr} which is the modified version of theorem 2 in~\cite{Jalali14}:

\begin{theorem}
\label{Thm:2}
Given $\cG=(\cN,\cE)$ and $\Omega\subset\cN\times\cN$ satisfying Assumption~\ref{Assum:1}. %represents the undirected network defined in section \ref{sec:1} and
Let $\{\cN_{\ell} \} _{\ell=1}^r$ represent the true underlying community structure of $\cG$, and $\bar{L}=D-\bar{S}$, where $\bar{S}$ is defined in \eqref{eq:Sbar}. Then for all $c>0$, there exists $C>0$ such that with probability of at least $1-cn^{-10}$, $(\bar{L}, \bar{S})$ is the unique optimal solution of \eqref{eq:new_tr} when $\rho = \frac{1}{32 \sqrt{np_0}}$, provided that
\begin{eqnarray}
\label{eq:n_cond}
 n \log ^2 n\leq C K_{\min} ^2~p_0 (1-2 \gamma)^2,
\end{eqnarray}
where $\gamma = \max \{1-p,q \}$ and $K_{\min}:=\min\{|\cN_\ell|:~1\leq\ell\leq r\}$ is the size of the smallest cluster.
\end{theorem}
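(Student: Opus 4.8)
The plan is to leverage the recovery guarantee of Chen et al.~\cite{Jalali14}, which establishes exactly this statement for the looser model~\eqref{eq:rpca_p}, and to transfer it to the tighter model~\eqref{eq:new_tr} by a deterministic nesting argument. The substantive probabilistic content --- the construction of an (approximate) dual certificate showing that $(\bar L,\bar S)$ uniquely minimizes $\norm{L}_*+\rho\norm{S}_1$ subject only to $\pi_\Omega(L+S)=\pi_\Omega(D)$, with probability at least $1-cn^{-10}$ whenever $\rho=1/(32\sqrt{np_0})$ and~\eqref{eq:n_cond} holds --- is inherited verbatim from~\cite{Jalali14} and need not be reproduced. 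What remains is to relate the feasible sets and objectives of the two problems.

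First I would record three facts. (i) By Lemma~\ref{Lem:1} we have $(\bar L,\bar S)\in\chi$, and since $\cD\subset\Omega$ forces $\diag(\bar S)=\diag(D)-\diag(\bar L)=\mathbf{1}-\mathbf{1}=\mathbf{0}$, the pair $(\bar L,\bar S)$ satisfies every constraint of~\eqref{eq:new_tr} and is therefore feasible for it. (ii) Every constraint of~\eqref{eq:new_tr} that is absent from~\eqref{eq:rpca_p} --- symmetry $L,S\in\mathbb{S}_n$, $\diag(S)=\mathbf{0}$, $|S_{ij}|\le 1$, $L\succeq\mathbf{0}_n$ and $L\ge\mathbf{0}_n$ --- is an additional restriction, so the feasible region of~\eqref{eq:new_tr} is contained in that of~\eqref{eq:rpca_p}. (iii) On the feasible region of~\eqref{eq:new_tr} we have $L\succeq\mathbf{0}_n$, hence $\Tr(L)=\norm{L}_*$ as noted just after~\eqref{eq:new_tr}; in particular the two objectives agree at every point feasible for~\eqref{eq:new_tr}, and they agree at $(\bar L,\bar S)$ since $\bar L\succeq\mathbf{0}_n$.

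With these in hand the conclusion is immediate. On the high-probability event of~\cite{Jalali14}, $(\bar L,\bar S)$ is the \emph{unique} minimizer of $\norm{L}_*+\rho\norm{S}_1$ over the feasible set of~\eqref{eq:rpca_p}. Take any $(L,S)\neq(\bar L,\bar S)$ feasible for~\eqref{eq:new_tr}. By (ii) it is feasible for~\eqref{eq:rpca_p}, so uniqueness gives $\norm{L}_*+\rho\norm{S}_1>\norm{\bar L}_*+\rho\norm{\bar S}_1$, which by (iii) is exactly $\Tr(L)+\rho\norm{S}_1>\Tr(\bar L)+\rho\norm{\bar S}_1$. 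Since by (i) $(\bar L,\bar S)$ is itself feasible for~\eqref{eq:new_tr}, it is the unique optimal solution of~\eqref{eq:new_tr} on the same event, proving the claim for $\rho=1/(32\sqrt{np_0})$ under~\eqref{eq:n_cond}.

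I do not anticipate a genuine obstacle, which is precisely what the paper means when it says the result ``trivially extends.'' The only place requiring care is the logical direction of the nesting: uniqueness on a larger feasible set descends to any subset that still contains the optimizer, so one must verify both that $(\bar L,\bar S)$ lies in the smaller set (step (i)) and that replacing $\norm{L}_*$ by $\Tr(L)$ is harmless there (step (iii)); if either failed, optimality for~\eqref{eq:rpca_p} would not transfer. Note also that the symmetry restriction in (ii) cannot break uniqueness, since restricting to $\mathbb{S}_n$ only removes candidate points and the surviving minimizer $(\bar L,\bar S)$ is already symmetric. Everything quantitative --- the choice of $\rho$, the exponent $-10$, and the sample-size condition~\eqref{eq:n_cond} involving $K_{\min}$, $p_0$ and $\gamma=\max\{1-p,q\}$ --- is carried over unchanged from~\cite{Jalali14}.
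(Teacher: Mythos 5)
Your proposal is correct and follows essentially the same route as the paper: invoke Theorem~2 of~\cite{Jalali14} for the looser problem~\eqref{eq:rpca_p}, use Lemma~\ref{Lem:1} to place $(\bar L,\bar S)$ in the feasible set of~\eqref{eq:new_tr}, and let uniqueness descend to the nested feasible region. You are in fact slightly more careful than the paper's own two-line argument, since you explicitly verify that $\Tr(L)=\norm{L}_*$ on the feasible set of~\eqref{eq:new_tr} and that the $\diag(S)=\mathbf{0}$ constraint holds at $(\bar L,\bar S)$, both of which the paper leaves implicit.
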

\begin{proof}
%See Section~\ref{sec:proof1} in supplementary pdf file.\vspace{-0.25cm}
Given $c>0$, Theorem 2 in~\cite{Jalali14} shows that there exists $C>0$ such that $(\bar{L}, \bar{S})$ is the unique optimal solution to \eqref{eq:rpca_p} with probability at least $1-cn^{-10}$ provided that \eqref{eq:n_cond} holds. Lemma~\ref{Lem:1} implies that $(\bar{L}, \bar{S})$ is feasible to \eqref{eq:new_tr}; hence, it must be an optimal solution to the more tighter problem in \eqref{eq:new_tr} as well. Moreover, under the assumptions of Theorem~\ref{Thm:2}, \eqref{eq:rpca_p} has a unique solution with high probability, which implies that $(\bar{L}, \bar{S})$ is the unique optimal solution to \eqref{eq:new_tr} w.p. at least $1-cn^{-10}$.
\end{proof}
\subsection{Algorithm}
In this section, we develop an \admm~algorithm to solve \eqref{eq:new_tr}. Define $\phi\subset\mathbb{S}_n\times\mathbb{S}_n$ as
\begin{equation}
\label{eq:phi}
\phi := \left\{(X,S):
\begin{array}{l}
  \proj{X+S}=\proj{D}, \\
  X \geq \mathbf{0}_n,\ \diag{(S)} = \mathbf{0},\\
  |S_{ij}| \leq 1, \quad 1\leq i \neq j\leq n
\end{array}
\right\}.
\end{equation}
%and $I_{\phi}$ denote the indicator function of $\phi$, i.e. $I_{\phi}(X,S) = 0$ when $(X,S) \in \phi$, and $I_{\phi}(X,S) = \infty$ otherwise.
By using partial variable splitting as in~\cite{Aybat14_arxiv,Aybat14_COAP}, \eqref{eq:new_tr} can be written equivalently as follows:
\begin{eqnarray}
\label{eq:parS}
\begin{aligned}
(L^*, L^*, S^*) \in \argmin_{L,X,S \in \mathbb{S}_n} & \Tr(L) +  \rho \norm{S}_1\\ %+ I_{\phi}(X,S) \\
\hbox{s.t.\ \ } & ~X = L,~L \succeq \mathbf{0}_n,~(X,S)\in\phi.
%& \diag{(S)} = 0,~ -1\leq S_{ij} \leq 1 \quad \forall ~ i \neq j
\end{aligned}
\end{eqnarray}
%To solve problem \eqref{eq:parS},
Let $\mathbb{S}_n^{+}$ denote the cone of $n\times n$ symmetric positive semidefine matrices. Given a penalty parameter $\mu>0$, the partial augmented Lagrangian~\cite{Bert82} of \eqref{eq:parS} is defined for any $L\in\mathbb{S}_n^{+}$, $(X,S)\in\phi$, and $Y\in\mathbb{S}_n$ as follows
\begin{eqnarray*}
\lefteqn{\cL_{\mu}(L,X,S;Y) =}\\
& & \Tr(L)+\rho \norm{S}_{1}+\fprod{Y,X-L}+ \tfrac{\mu}{2}\norm{X-L}_{F}^2.
\end{eqnarray*}
Given $Y\in\mathbb{S}_n$, since it is not easy to minimize $\cL_{\mu}(L,X,S;Y)$ jointly in $(L,X,S)\in\mathbb{S}_n^{+}\times\phi$, the \emph{method of multipliers} is not a practical approach to solve \eqref{eq:parS}. On the other hand, given $Y$, alternating minimization of $\cL_{\mu}(L,X,S;Y)$ in $(X,S)\in\phi$ for fixed $L$, and in $L\in\mathbb{S}_n^{+}$ for fixed $(X,S)$ can be done efficiently. Therefore, we propose \admip, which is an \admm~algorithm with increasing penalty sequence, to solve \eqref{eq:parS}. Each step of \admip~is displayed in Figure~\ref{al:ADMM}. The subproblems in Step~\ref{algeq:L_problem} and Step~\ref{algeq:XS_problem} are the computational bottlenecks, and they can be solved efficiently as explained in Lemma~\ref{Lem:2} and Lemma~\ref{Lem:3}. The initialization part will be discussed in Section~\ref{sec:initialization}.

The convergence of \admip~directly follows from \cite{He02_1J}. Indeed, the
     variable penalty \admm~algorithms in~\cite{He98_1J,He00_1J,He02_1J}
    % are proposed for solving
     are proposed to solve variational inequalities~(VI) of the form:
  \begin{align*}
  &(x-x^*)^\top F(x^*)+(y-y^*)^\top G(y^*) \geq 0,\ \forall
  (x,y)\in\Omega\\
  &\Omega:=\{(x,y):~x\in\cX,~y\in\cY,~Ax+By=b\}, %\subset\reals^{n_1+n_2},
  \end{align*}
  where $A\in\reals^{m\times n_1}$, $B\in\reals^{m\times n_2}$, and
  $b\in\reals^m$. The convergence proofs in~\cite{He98_1J,He00_1J,He02_1J}
  require that both $F:\cX\rightarrow\reals^{n_1}$ and
  $G:\cY\rightarrow\reals^{n_2}$ be \emph{continuous point-to-point maps}
  that are monotone with respect to the non-empty closed convex sets
  $\cX\subset\reals^{n_1}$ and $\cY\subset\reals^{n_2}$, respectively. % In
  % particular, when
  When these variable penalty \admm~methods for VI are applied to the VI
  reformulation of
  convex optimization problems of the form $\min\{f(x)+g(y):\
  (x,y)\in\Omega\}$, the requirement that $F$ and $G$ be continuous
  point-to-point maps
  % requirement in
  % the convergence proofs of~\cite{He98_1J,He00_1J,He02_1J} is equivalent
  % to requiring
  implies that $F(x)=\grad f(x)$, and $G(y)=\grad g(y)$. % However, in the
  % problem we consider both terms in the objective function
  On the other hand, when $f$ (similarly $g$) is a non-smooth
  convex function, % . Therefore, if we write the problem as a variational
  % inequality, we need to use point-to-set maps, i.e.
  $F$ (similarly $G$) is the subdifferential operator, which
  is a \emph{point-to-set map}; %i.e., \emph{multi-function}; %. Hence,
  therefore, the convergence proofs for variable penalty \admm~ algorithms
  % \emph{variable penalty}
  in~\cite{He98_1J,He00_1J,He02_1J} do not % hold
  % for
  extend to non-smooth convex optimization problems
  -- see Assumption~A and the following discussion on page 107 in
  \cite{He02_1J}. However, even though the objective in \eqref{eq:parS} is non-smooth, the following result establishes that the convergence of \admip~follows from \cite{He02_1J}.
\begin{theorem}
\label{thm:comvergence}
Let $Z_k=(L_k,X_k,S_k,Y_k)$ denote the iterates generated by \admip~in Figure~\ref{al:ADMM}, and $\cZ^*$ denote the set of optimal primal-dual pairs to \eqref{eq:parS}, i.e., $(L^*,X^*,S^*,Y^*)\in \cZ^*$ if and only if
\begin{equation*}
\begin{array}{l}
\fprod{\mathbf{I}_n-Y^*, L-L^*}\geq 0, \quad\forall~L\succeq\mathbf{0}_n,\\
\rho\fprod{G, S-S^*}+\fprod{Y^*, X-X^*}\geq 0,\quad \forall~(X,S)\in\phi,\\
X^*=L^*,\quad G\in\partial\norm{S^*}_1.
\end{array}
\end{equation*}
Then $\min\{\norm{Z_k-Z}_F:~Z\in\cZ^*\}\rightarrow 0$. Moreover, $\{Z_k\}$ is bounded.
\end{theorem}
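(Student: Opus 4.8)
The plan is to place \admip~into the variable-penalty \admm~template for the variational inequality~(VI) recalled above, to identify the single step in the convergence analysis of~\cite{He02_1J} that invokes the point-to-point hypothesis, and then to show that the special structure of~\eqref{eq:parS} makes that step go through even though $\rho\norm{S}_1$ is non-smooth. Concretely, I would set $x=L$ with $\cX=\mathbb{S}_n^{+}$ and $f(L)=\Tr(L)$, set $y=(X,S)$ with $\cY=\phi$ and $g(X,S)=\rho\norm{S}_1$, and take the coupling $Ax+By=b$ to be $X-L=\mathbf{0}_n$ with multiplier $Y$. Under this identification the three VI relations defining $\cZ^*$ in the theorem statement are exactly the KKT/saddle conditions of~\eqref{eq:parS}, so it suffices to establish that the iterates $Z_k=(L_k,X_k,S_k,Y_k)$ approach this set.

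The key observation is that \emph{only} $g$ is non-smooth. Since $f(L)=\Tr(L)$ is linear, the first-block operator is the constant map $F(L)=\grad f(L)=\mathbf{I}_n$, which is trivially a continuous, monotone, point-to-point map, so the $L$-block fits the hypotheses of~\cite{He02_1J} verbatim. The only deviation from that framework is the second-block operator $G=\partial g$, which for the $\ell_1$-norm is point-to-set. I would control this using two structural facts. First, because the $(X,S)$-subproblem in Step~\ref{algeq:XS_problem} is solved exactly, its optimality condition produces at every iteration an explicit subgradient $G_k\in\partial\norm{S_k}_1$; this realized selection is what actually enters the VI, so one never needs $G$ to be single-valued, only monotone, which $\partial\norm{\cdot}_1$ is. Second, every such subgradient obeys $\norm{G_k}_\infty\leq 1$ entrywise, i.e., the $\ell_1$-subgradients are \emph{uniformly bounded}, independently of the iterate.

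With these facts I would retrace the Fejér-type energy inequality of~\cite{He02_1J}. There the variable penalty forces one to bound cross-terms proportional to $(\mu_{k+1}-\mu_k)$ multiplied by inner products involving the block operators; for $F$ these terms vanish because $F\equiv\mathbf{I}_n$ is constant, and for $G$ they are bounded because $\norm{G_k}_\infty\leq 1$. Since the increasing-penalty rule of \admip~keeps $\{\mu_k\}$ bounded, $\sum_k(\mu_{k+1}-\mu_k)<\infty$, so every perturbation term is summable. One then recovers a recursion of the form $\min\{\norm{Z_{k+1}-Z}_{M_{k+1}}^2:~Z\in\cZ^*\}\leq \min\{\norm{Z_k-Z}_{M_k}^2:~Z\in\cZ^*\}-\delta_k+\epsilon_k$, where $M_k$ is the penalty-weighted metric of~\cite{He02_1J}, $\delta_k\geq 0$ measures the primal-dual residuals, and $\sum_k\epsilon_k<\infty$. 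The standard consequences follow: $\{Z_k\}$ is bounded (quasi-Fej\'er monotone with respect to $\cZ^*$), $\delta_k\to 0$, and every limit point of $\{Z_k\}$ satisfies the three VI relations, hence lies in $\cZ^*$. Because $\{\mu_k\}$ is bounded away from $0$ and $\infty$, the metrics $M_k$ are uniformly equivalent to $\norm{\cdot}_F$, and therefore $\min\{\norm{Z_k-Z}_F:~Z\in\cZ^*\}\rightarrow 0$.

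I expect the main obstacle to be the third step: verifying that the non-smooth second block does not break the variable-penalty argument. The delicate point is precisely the one flagged on page~107 of~\cite{He02_1J} --- without point-to-point continuity one cannot directly pass to limits in the operator values --- and the heart of the proof is to show that uniform boundedness of $\partial\norm{\cdot}_1$ together with summability of the penalty increments supplies exactly the control that continuity would otherwise provide, so that the limiting VI is still satisfied and the convergence of~\cite{He02_1J} carries over.
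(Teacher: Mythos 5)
Your block identification ($x=L$, $\cX=\mathbb{S}_n^{+}$, $f=\Tr$; $y=(X,S)$, $\cY=\phi$, $g=\rho\norm{\cdot}_1$; coupling $X-L=\mathbf{0}_n$ with multiplier $Y$) matches the paper's setup, and your reading of the three VI relations as the optimality conditions of \eqref{eq:parS} is correct. The problem is the core of your argument: retracing the Fej\'er-type energy estimate of \cite{He02_1J} with the point-to-set operator $\partial\norm{\cdot}_1$ replaced by a bounded subgradient selection $G_k$ is precisely the extension that the paper points out is \emph{not} available (the variable-penalty convergence proofs in \cite{He98_1J,He00_1J,He02_1J} require continuous point-to-point maps; see the discussion of Assumption~A on page~107 of \cite{He02_1J}), and you do not actually carry it out --- you state that you ``expect the main obstacle to be'' verifying that the non-smooth block does not break the variable-penalty argument and that ``the heart of the proof'' is to supply the control that continuity would otherwise provide. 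That heart is the entire content of the theorem; monotonicity, uniform boundedness of the selections, and outer semicontinuity of $\partial\norm{\cdot}_1$ at the limit are necessary ingredients but do not by themselves certify that every inequality in \cite{He02_1J} that invokes the point-to-point hypothesis survives. As written, your proposal is a proof program with its decisive step left open.

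The paper closes this gap with a short reformulation that your proposal misses entirely: split $S=S^+-S^-$ with $S^+,S^-\geq\mathbf{0}_n$, so that $\norm{S}_1=\fprod{\mathbf{E}_n, S^++S^-}$ becomes \emph{linear}, and replace $\phi$ by the enlarged set $\phi'$ carrying $S^+,S^-\geq\mathbf{0}_n$, $\diag(S^+)=\diag(S^-)=\mathbf{0}$, and $S^+_{ij}+S^-_{ij}\leq 1$ for $i\neq j$. The resulting problem \eqref{eq:smooth_equivalent} has linear objectives in both blocks, hence satisfies the smoothness hypotheses of \cite{He02_1J} verbatim; one then observes that the variable-penalty \admm~iterates for this reformulation coincide with the \admip~iterates under $\tilde{S}_k=\tilde{S}_k^+-\tilde{S}_k^-$, and Theorem~4 of \cite{He02_1J} delivers boundedness of $\{Z_k\}$ and $\min\{\norm{Z_k-Z}_F:~Z\in\cZ^*\}\rightarrow 0$ directly. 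To repair your argument you should either supply the full non-smooth extension of the variable-penalty analysis (a substantial undertaking) or adopt this splitting, which reduces the theorem to a citation plus an iterate-equivalence check.
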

\begin{proof}
%See Section~\ref{sec:proof2} in supplementary pdf file.
Using the change of variables $S:=S^+-S^-$ for $S^+,S^-\geq\mathbf{0}_n$, $\norm{S}_1$ can be equivalently written as $\fprod{\mathbf{E}_n, S^++S^-}$, where $\mathbf{E}_n\in\reals^{n \times n}$ is a matrix of ones. Consider
\begin{equation}
\label{eq:smooth_equivalent}
\min\left\{\Tr(L) +  \rho \fprod{\mathbf{E}_n, S^++S^-}:
\begin{array}{l}
X = L,~L \succeq \mathbf{0}_n,\\
(X,S^+,S^-)\in\phi'
\end{array}\right\},
\end{equation}
where $\phi'\subset\prod_{i=1}^3\mathbb{S}_n$ is defined as
  $$\phi':=\left\{(X,S^+,S^-):\hspace{-1mm}
\begin{array}{l}
  \proj{X+S^+-S^-}=\proj{D}, \\
  X\geq\mathbf{0}_n,~S^+\geq\mathbf{0}_n,~S^-\geq\mathbf{0}_n,\\
  \diag(S^+)=\diag(S^-)=\mathbf{0},\\
  S^+_{ij}+S_{ij}^-\leq 1,\ \forall i\neq j
\end{array}\right\}.$$
Note that \eqref{eq:smooth_equivalent} is a smooth convex optimization problem equivalent to \eqref{eq:parS}, and it satisfies all the assumption in~\cite{He02_1J}. Given a nondecreasing penalty sequence $\{\mu_k\}$ such that $\sup_k \mu_k<\infty$, let $\{(\tilde{L}_k,\tilde{X}_k,\tilde{S}^+_k,\tilde{S}_k^-,\tilde{Y}_k)\}$ be the iterate sequence generated by the variable penalty \admm~in~\cite{He02_1J} when the augmented Lagrangian of \eqref{eq:smooth_equivalent} is minimized alternatingly in $(X,S^+,S^-)\in\phi'$, and in $L \succeq \mathbf{0}_n$. Define $\tilde{Z}_k:=(\tilde{L}_k,\tilde{X}_k,\tilde{S}_k,\tilde{Y}_k)$, where $\tilde{S}_k:=\tilde{S}_k^+-\tilde{S}_k^-$. It is easy to see that $\{\tilde{Z}_k\}$ would be the same with the one generated by \admip~in Figure~\ref{al:ADMM}, i.e., $\tilde{Z}_k=Z_k$ for all $k\geq 1$. The result follows from Theorem~4 in~\cite{He02_1J}, which shows that $\{\tilde{Z}_k\}$ is bounded and $\min\{\norm{\tilde{Z}_k-Z}_F:~Z\in\cZ^*\}\rightarrow 0$.
\end{proof}
\begin{figure}[h!]
    \rule[0in]{3.25in}{1pt}\\
    \textbf{Algorithm \admip~$\left(\rho,~\{\mu_k\}_{k\in\integers_+}\right)$}\\
    \rule[0.125in]{3.25in}{0.1mm}
    \vspace{-0.1in}
{\footnotesize
\begin{algorithmic}[1]
\State \textbf{Input}: $\rho>0$, $\{\mu_k\}_{k\in\integers_+}\subset\reals_{++}$ s.t.
$\mu_{k+1}\geq\mu_k$, and $\sup_k\mu_k<\infty$
\State \textbf{Initialization:} $k=0;~L_0 = \mathbf{0}_n;$\\ $Y_0 = \frac{\pi_{\Omega}(D)}{\max\left\{ \norm{\pi_{\Omega}(D)}_2,~\rho^{-1}\norm{\pi_{\Omega}(D)}_{\infty} \right\}};$
   \While{not converged}
      \State $(X_{k+1},S_{k+1})\gets \argmin\{\rho \norm{S}_{1}+ \frac{\mu_k}{2}\norm{X- L_{k}+\frac{Y_k}{\mu_k}}_{F}^2:\ (X,S) \in \phi \}$ \label{algeq:XS_problem}
      \State $L_{k+1}\gets \argmin\{\Tr(L)+ \frac{\mu_k}{2}\norm{L-X_{k+1}-\frac{Y_k}{\mu_k}}_{F}^2:~L\succeq 0\}$ \label{algeq:L_problem}
      \State $Y_{k+1}\gets Y_k+\mu_k(X_{k+1}-L_{k+1})$
      \State $k\gets k+1$
   \EndWhile\label{euclidendwhile}
\end{algorithmic}
}
\rule[0.125in]{3.25in}{0.1mm}
    \vspace*{-0.25in}
    \caption{\admip: Alternating Direction Method with Increasing Penalty for Clustering}\label{al:ADMM}
\end{figure}

Lemma~\ref{Lem:2} shows that the subproblem in Step~\ref{algeq:L_problem} can be solved efficiently by computing a partial-eigenvalue decomposition of an $n\times n$ matrix.
\begin{lemma}
\label{Lem:2}
The solution to the subproblem in Step~\ref{algeq:L_problem} can be written in closed form:
\begin{eqnarray}
\label{eq:Lstar}
\qquad L_{k+1} = W \diag\left(\max\left\{\lambda_k-\mu_k^{-1}\mathbf{1},\mathbf{0}\right\}\right)W^T,
\end{eqnarray}
where $W \diag(\lambda_k)W^T$ is the eigenvalue decomposition of $Q_k^X:=X_{k+1}+\dfrac{Y_k}{\mu_k}$.
\end{lemma}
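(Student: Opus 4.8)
The plan is to reduce the constrained subproblem in Step~\ref{algeq:L_problem} to a Euclidean projection onto the cone $\mathbb{S}_n^{+}$, for which an eigenvalue-thresholding formula is classical. Set $Q:=Q_k^X=X_{k+1}+\mu_k^{-1}Y_k$. Because $(X_{k+1},S_{k+1})\in\phi$ gives $X_{k+1}\in\mathbb{S}_n$ and the multiplier satisfies $Y_k\in\mathbb{S}_n$, the matrix $Q$ is symmetric and admits an orthogonal eigendecomposition $Q=W\diag(\lambda_k)W^T$. First I would rewrite the linear term as $\Tr(L)=\fprod{\mathbf{I}_n,L}$ and complete the square in the Frobenius norm.

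The key algebraic step is the identity
\begin{equation*}
\Tr(L)+\tfrac{\mu_k}{2}\norm{L-Q}_F^2=\tfrac{\mu_k}{2}\norm{L-\left(Q-\mu_k^{-1}\mathbf{I}_n\right)}_F^2+c,
\end{equation*}
where $c$ gathers the terms not depending on $L$. Thus the subproblem is equivalent to minimizing $\norm{L-(Q-\mu_k^{-1}\mathbf{I}_n)}_F^2$ over $L\succeq\mathbf{0}_n$, i.e. to projecting the symmetric matrix $Q-\mu_k^{-1}\mathbf{I}_n$ onto $\mathbb{S}_n^{+}$.

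I would then invoke the standard projection characterization: for $A\in\mathbb{S}_n$ with eigendecomposition $A=U\diag(\nu)U^T$, the unique minimizer of $\norm{M-A}_F^2$ over $M\succeq\mathbf{0}_n$ is $U\diag(\max\{\nu,\mathbf{0}\})U^T$. Since $\mathbf{I}_n$ commutes with $Q$, the shifted matrix $Q-\mu_k^{-1}\mathbf{I}_n$ keeps the eigenvectors $W$ and merely shifts the eigenvalues to $\lambda_k-\mu_k^{-1}\mathbf{1}$; applying the projection formula then gives $L_{k+1}=W\diag(\max\{\lambda_k-\mu_k^{-1}\mathbf{1},\mathbf{0}\})W^T$ exactly.

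The only step requiring care is the projection characterization, which I would justify (or cite) via a von Neumann/Ky Fan trace inequality: $\fprod{M,A}$ is maximized over matrices with prescribed spectrum when $M$ and $A$ share an eigenbasis, which decouples the problem into the separable scalar problems $\min_{m_i\geq 0}(m_i-\nu_i)^2$ solved by $m_i=\max\{\nu_i,0\}$. The main conceptual point — and the one I would flag at the outset — is that the linear penalty $\Tr(L)$ is exactly absorbed by the constant shift $-\mu_k^{-1}\mathbf{I}_n$, which commutes with $Q$ and hence perturbs only the spectrum, not the eigenvectors; this is what turns the trace-plus-quadratic objective into a clean eigenvalue soft-thresholding.
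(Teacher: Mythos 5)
Your proposal is correct and follows essentially the same route as the paper's proof: both complete the square to rewrite the subproblem as the Euclidean projection of $Q_k^X-\mu_k^{-1}\mathbf{I}_n$ onto the positive semidefinite cone, and both conclude via the standard eigenvalue-thresholding characterization of that projection. The extra justification you give for the projection formula (via the trace inequality) is a detail the paper simply cites as a known property.
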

\begin{proof}
%See Section~\ref{sec:proof3} in the supplementary pdf file.
%Let $Q^X_k:=X_k+\dfrac{Y_k}{\mu_k}$.
Since $\Tr(L)=\fprod{\mathbf{I}_n, L}$, the subproblem in Step~\ref{algeq:L_problem} can be equivalently written as
\begin{equation}
\label{eq:proofL}
%\begin{aligned}
L_{k+1} %& = \argmin_{L\succeq 0}\{\fprod{\mathbf{I}_n-Y_k,L-X_k} + \tfrac{\mu_k}{2}\norm{L-X_k}_{F}^2 \},\\
%& = \argmin_{L\succeq 0}\{\frac{1}{\mu_k} \langle I,L \rangle + \frac{1}{2} \norm {L-q(X_k)}_F^2 \}\\
 = \argmin_{L\succeq 0}\norm{L-\left(Q_k^X-\mu_k^{-1}\mathbf{I}_n\right)}_F.
%\end{aligned}
\end{equation}
Since $Q_k^X-\mu_k^{-1}\mathbf{I}_n=W \diag(\lambda_k-\mu_k^{-1}\mathbf{1})W^T$, \eqref{eq:Lstar} follows from the properties of Euclidean projection onto the positive semidefinite cone of symmetric matrices.
\end{proof}
%In this paper, we used an .
One of the main reasons of using an increasing sequence of penalties $\{\mu_k\}$ in \admip~is because the work required for eigenvalue decomposition in Step~\ref{algeq:L_problem} reduces significantly as fewer leading eigenvalues are needed for small values of $\mu_k$. Note that according to \eqref{eq:Lstar}, we do not need to compute eigenvalues of $Q_k^X$ that are smaller than $\mu_k^{-1}$. Indeed, $Q_k^X$ may not be low rank, and many of its eigenvalues may be large during the initial iterations in the transient phase of the algorithm. This could make Step~\ref{algeq:L_problem} an expensive operation for a constant penalty \admm~method with $\mu_k=\mu$, as there may be many leading eigenvalues that are larger than $\mu$. However, based on \eqref{eq:Lstar}, by choosing small values for $\mu_k$ during the initial iterations and then gradually increasing it, we can avoid computing all the eigenvalues of $Q_k^X$. Refer to \cite{Aybat14_arxiv} for more details about this concept. Moreover, it is shown in \cite{He02_1J} that results of Theorem~\ref{thm:comvergence} are still true if Step~\ref{algeq:L_problem} is computed inexactly. Since with very high probability the optimal solution $L^*$ is unique and equal to $\bar{L}$, which has low rank, Step~\ref{algeq:L_problem} can be computed approximately by calculating only a small number of leading eigenvalues, and the approximation error will be small for all sufficiently large $k$. Indeed, Theorem~\ref{thm:comvergence} implies that with high probability $X_k\rightarrow L^*=\bar{L}$ (due to uniqueness of $L^*$). Hence, for any $\delta>0$, there exists $\{\mu_k\}$ such that $\norm{Q_k^X-X_{k+1}}_2\leq \tfrac{\delta}{2}$ due to boundedness of $\{Y_k\}$, and $\norm{X_{k+1}-\bar{L}}_2\leq\tfrac{\delta}{2}$, where $\norm{.}_2$ denotes the spectral norm. Thus, $\norm{Q^X_k-\bar{L}}_2\leq\delta$. Moreover, since eigenvalues of a matrix is a continuous function of its entries, it follows that for all $\delta>0$, there exists $\{\mu_k\}$ such that $\norm{\lambda_k-\bar{\lambda}}_\infty\leq\delta$, where $\bar{\lambda}\in\reals^n$ denotes the vector of eigenvalues of $\bar{L}$. Since the number of nonzeros in $\bar{\lambda}$ is $r\ll n$, $n-r$ components of $\lambda_k$ is between $-\delta$ and $\delta$.

%To solve this problem, we modified the Inexact Augmented Lagrange Multiplier (IALM) code for robust PCA mentioned in~\cite{Lin2013_arxiv}.
In order to compute the eigenvalue decomposition of $Q_k^X$ in Step~\ref{algeq:L_problem}, we used LANSVD routine in PROPACK package. LANSVD routine is based on the Lanczos bidiagonalization algorithm with partial reorthogonalization for computing partial singular value decomposition~(SVD).
%Since the eigenvalue decomposition of MATLAB gives some negative eigenvalues for the resulting positive semi definite matrix, we used singular value decomposition instead.
Let $\tilde{\lambda}:=\lambda_k-\mu_k^{-1}\mathbf{1}$, and %\eqref{eq:Lstar} shows that only the eigenvalues such that $\tilde{\lambda}_i>0$ should be computed.
$U\diag(\sigma)V^T$ denote SVD of $Q_k^X-\mu_k^{-1}\mathbf{I}_n$, where $U_i$, $V_i$ denote left and right singular vectors corresponding to $i$-th singular value $\sigma_i$. It is clear that if $U_i^T V_i=1$, then $\tilde{\lambda}_i=\sigma_i>0$; and if $U_i^TV_i=-1$, then $\tilde{\lambda}_i=-\sigma_i<0$. Hence, \eqref{eq:Lstar} can be computed efficiently using a partial SVD.
\begin{lemma}
\label{Lem:3}
%Let $q(L_{k+1}) = L_{k+1}-\frac{Y_k}{\mu_k}$ and $\cD = \{ (i,i):1 \leq i \leq n \}$. Define $\bar{\Omega} = \Omega \setminus \cD$.
Let $\Omega:=\bar{\Omega}\cup\cD$ denote the set of observable entries of the adjacency matrix $D$ such that $\diag(D)=\mathbf{1}$, where $\cD=\{(i,i):~i\in\cN\}$, and $\bar{\Omega}\subset\cN\times\cN\setminus \cD$. The solution to the subproblem in Step~\ref{algeq:XS_problem} can be written in closed form:
\begin{eqnarray*}
%\label{eq:Sstar}
\begin{aligned}
C_1 &= \sgn\left(\pi_{\bar{\Omega}}(D-Q_k^L)\right),\\
C_2 &= \max \{ \abs{\pi_{\bar{\Omega}}(D-Q_k^L)}-\rho\mu_k^{-1}\mathbf{E}_n, \mathbf{0}_n\}, \\
S_{k+1} & = \min \{\pi_{\bar{\Omega}}(D),~\max \{-\mathbf{E}_n,~C_1\odot C_2\}\},\\
X_{k+1} & = \pi_{\Omega}(D-S_{k+1})+ \max \{ \pi_{\Omega^c}(Q_k^L),\mathbf{0}_n \},
\end{aligned}
\end{eqnarray*}
where $Q_k^L:=L_k-\frac{Y_k}{\mu_k}$, $\mathbf{E}_n\in\reals^{n \times n}$ is a matrix of ones, and $\odot$ represents the component-wise multiplication. %operator.
\end{lemma}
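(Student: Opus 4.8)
The plan is to exploit the complete separability of the subproblem in Step~\ref{algeq:XS_problem}. Substituting $Q_k^L = L_k - \tfrac{Y_k}{\mu_k}$, its objective becomes $\rho\norm{S}_1 + \tfrac{\mu_k}{2}\norm{X - Q_k^L}_F^2$, in which both terms separate entrywise; likewise every constraint defining $\phi$ in \eqref{eq:phi} couples $X_{ij}$ and $S_{ij}$ only for the same index $(i,j)$. Since $D$ and $Q_k^L$ are symmetric and $\Omega$ is symmetric, the decoupled per-entry minimizer is automatically symmetric, so it suffices to minimize over unordered pairs and then observe that the assembled $(X_{k+1},S_{k+1})$ lies in $\mathbb{S}_n\times\mathbb{S}_n$. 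I would therefore solve the scalar problem at each entry and reassemble the closed forms.

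First I would partition the indices into the three classes $\Omega^c$, $\cD$, and $\bar{\Omega}$. For $(i,j)\in\Omega^c$ there is no linking equality, so $S_{ij}$ enters only through $\rho|S_{ij}|$ and is set to $0$, while $X_{ij}$ minimizes $\tfrac{\mu_k}{2}(X_{ij}-(Q_k^L)_{ij})^2$ over $X_{ij}\geq 0$, giving $X_{ij}=\max\{(Q_k^L)_{ij},0\}$; this reproduces the $\projc{\cdot}$ term in the stated formula. For $(i,j)\in\cD$, the constraint $\diag(S)=\mathbf{0}$ forces $S_{ii}=0$, and since $\cD\subset\Omega$ with $D_{ii}=1$ the equality $X_{ii}+S_{ii}=D_{ii}$ gives $X_{ii}=1$, which is consistent with $X_{ii}\geq 0$.

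The substantive case is $(i,j)\in\bar{\Omega}$. Here I would use the equality $X_{ij}+S_{ij}=D_{ij}$ to eliminate $X_{ij}=D_{ij}-S_{ij}$; the constraint $X_{ij}\geq 0$ becomes $S_{ij}\leq D_{ij}$, which together with $|S_{ij}|\leq 1$ confines $S_{ij}$ to the interval $[-1,\min\{1,D_{ij}\}]$. Writing $a:=(D-Q_k^L)_{ij}$, the scalar objective is $\rho|S_{ij}|+\tfrac{\mu_k}{2}(a-S_{ij})^2$, a strictly convex function whose unconstrained minimizer is the soft-thresholding $\sgn(a)\max\{|a|-\rho\mu_k^{-1},0\}$, i.e.\ exactly $(C_1\odot C_2)_{ij}$. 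Because this objective is convex in a single variable, its minimizer over the interval is the Euclidean projection of the unconstrained optimum onto $[-1,\min\{1,D_{ij}\}]$, i.e.\ the clipping $\min\{\min\{1,D_{ij}\},\ \max\{-1,(C_1\odot C_2)_{ij}\}\}$.

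The main obstacle, and the only place requiring care, is collapsing the two upper bounds $S_{ij}\leq 1$ and $S_{ij}\leq D_{ij}$ into the single clip $\min\{D_{ij},\cdot\}$ that appears in the formula for $S_{k+1}$: this uses that $D_{ij}\in\{0,1\}$ for off-diagonal observable entries, so $\min\{1,D_{ij}\}=D_{ij}=\pi_{\bar{\Omega}}(D)_{ij}$, while $D_{ij}\geq 0>-1$ keeps the interval nonempty. Assembling the three cases into matrices—noting that $C_1$ and $C_2$ vanish off $\bar{\Omega}$, so the clip returns $0$ on $\cD\cup\Omega^c$—yields the stated expression for $S_{k+1}$; substituting back through $X_{ij}=D_{ij}-S_{ij}$ on $\Omega$ together with the $\Omega^c$ formula gives $X_{k+1}=\pi_{\Omega}(D-S_{k+1})+\max\{\projc{Q_k^L},\mathbf{0}_n\}$. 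A final verification that this $(X_{k+1},S_{k+1})$ satisfies all constraints in $\phi$ completes the argument.
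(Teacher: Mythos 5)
Your proposal is correct and follows essentially the same route as the paper: both exploit entrywise separability, partition indices into $\Omega^c$, $\cD$, and $\bar{\Omega}$, reduce the $\bar{\Omega}$ entries to a scalar soft-thresholding problem clipped to the interval $[-1,D_{ij}]$ (using $D_{ij}\in\{0,1\}$ to collapse the two upper bounds), and recover $X_{k+1}$ from the linking equality on $\Omega$ and a nonnegative projection on $\Omega^c$. Your explicit remarks on symmetry of the assembled minimizer and nonemptiness of the feasible interval are minor additions the paper leaves implicit.
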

\begin{proof}
%See Section~\ref{sec:proof4} in the supplementary pdf file.
The subproblem in step 5 can be written as
\begin{equation}
\label{eq:proofS_1}
(X_{k+1},S_{k+1}) = \argmin_{(X,S) \in \phi } \rho \norm{S}_{1}+\frac{\mu_k}{2}\norm{X-Q_k^L}_{F}^2.% : ~\diag(S) = 0,~-1\leq S_{ij} \leq 1~ \forall ~ i \neq j \}
\end{equation}
%From $(X,S) \in \phi$, the following inequalities can be obtained for any $(i,j) \in \Omega$:
\begin{comment}
\begin{eqnarray}
\label{eq:proofS}
S_{ij} = D_{ij}-X_{ij} \xRightarrow{X_{ij} \geq 0} S_{ij} \leq D_{ij} \xRightarrow{-1 \leq S_{ij} \leq 1} -1 \leq S_{ij} \leq \min \{ 1,D_{ij}\} \xRightarrow{D_{ij} \in \{ 0,1 \} } -1 \leq S_{ij} \leq D_{ij}
\end{eqnarray}
\end{comment}
For $(X,S) \in \phi$, we have
\begin{eqnarray}
\label{eq:proofS_2}
\begin{aligned}
%\pi_{\Omega}(X) = \pi_{\Omega}(D-S)\Longrightarrow
\lefteqn{X-Q_k^L  = \pi_{\bar{\Omega}}(D-S-Q_k^L)}\\ %\pi_{\Omega}(X-Q_k^L)+\pi_{\Omega^c}(X-Q_k^L), \nonumber \\
%& = \pi_{\Omega}(D-S-Q_k^L)+\pi_{\Omega^c}(X-Q_k^L), \nonumber\\
& &\mbox{}+\pi_{\cD}(D-Q_k^L)+\pi_{\Omega^c}(X-Q_k^L). %\nonumber
\end{aligned}
\end{eqnarray}
Moreover, from the optimality conditions for \eqref{eq:proofS_1}, it is clear that $(S_{k+1})_{ij}=0$ for all $(i,j)\in\cD\cup\Omega^c$. Therefore, %we conclude that
\eqref{eq:proofS_1} is equivalent to the following problem:
%\begin{eqnarray}
%\label{eq:proofS_3}
%\begin{aligned}
%& \pi_{\bar{\Omega}}(X) = \pi_{\bar{\Omega}}(D-S)\Rightarrow \pi_{\bar{\Omega}}(S) = \pi_{\bar{\Omega}}(D-X)\xRightarrow{X \geq 0} \pi_{\bar{\Omega}}(S) \leq \pi_{\bar{\Omega}}(D)\xRightarrow{\pi_{\bar{\Omega}}(S) \leq E} \\
%& \pi_{\bar{\Omega}}(S) \leq \min \{E,\pi_{\bar{\Omega}}(D) \} \Rightarrow \pi_{\bar{\Omega}}(S) \leq\pi_{\bar{\Omega}}(D)
%\end{aligned}
%\end{eqnarray}
%where $E \in \reals^{n \times n}$ is a matrix such that $E_{ij}=1$ for all $1 \leq i,j \leq n$. The last inequality comes from the fact that $D$ contains only 0 and 1.
%
%The two constraints in \eqref{eq:proofS_1} together with $(X,S) \in \phi$, immediately imply:
%\begin{eqnarray}
%\label{eq:proofS_4}
%\begin{aligned}
%\pi_{\cD}(S_{k+1}) & =0 \\ \pi_{\Omega^c}(S_{k+1}) & =0
%\end{aligned}
%\end{eqnarray}
%
%From \eqref{eq:proofS_2} \eqref{eq:proofS_3},and \eqref{eq:proofS_4}, an equivalent problem of \eqref{eq:proofS_1} can be written as follows:
\begin{eqnarray}
\label{eq:proofS_5}
\begin{aligned}
\min_{(X,S) \in \mathbb{S}_n\times\mathbb{S}_n}~ & \rho \norm{\pi_{\bar{\Omega}}(S)}_{1} + h(X,S)\\
\hbox{s.t.\ \ } & \hspace{0.2cm} \ \ \qquad \mathbf{0}_n \leq \pi_{\Omega^c}(X),\\ %\nonumber\\
&-\pi_{\bar{\Omega}}(\mathbf{E}_n) \leq \pi_{\bar{\Omega}}(S) \leq \pi_{\bar{\Omega}}(D), %\nonumber
\end{aligned}
\end{eqnarray}
where $h(X,S):=\tfrac{\mu_k}{2}\norm{\pi_{\bar{\Omega}}(S)-\pi_{\bar{\Omega}}(D-Q_k^L)-\pi_{\Omega^c}(X-Q_k^L)}_{F}^2$.
%for all $1 \leq i,j \leq n$. First, let us find the solution $S_{k+1}$. For $(i,j) \in \bar{\Omega}^c = \cD \cup \Omega^c$, \eqref{eq:proofS_4} already provides the solution. Therefore we only need to find the value of $(S_{k+1})_{ij}$ for all $(i,j) \in \bar{\Omega}$. For simplicity, let us define
For the sake of notational simplicity, let $\tilde{S}_{ij} := (D-Q_k^L)_{ij}$ for all $(i,j)\in\bar{\Omega}$. Note that \eqref{eq:proofS_5} is separable over $(i,j)$. Therefore, for all $(i,j)\in\bar{\Omega}$, %(\pi_{\Omega}(D-q(L_{k+1})))_{ij}+(\pi_{\Omega^c}(X-q(L_{k+1})))_{ij}$. Then the values of $(S_{k+1})_{ij}$ for any $(i,j)\in \bar{\Omega}$ can be obtained by solving the following problem:
\begin{eqnarray}
\label{eq:proofS_6}
\begin{aligned}
\quad (S_{k+1})_{ij} = \argmin_{S_{ij} \in \reals} ~ & \rho \abs{S_{ij}}+ \frac{\mu _{k}}{2}(S_{ij}-\tilde{S}_{ij})^2\\
\hbox{\ s.t. \ \ } & -1 \leq S_{ij} \leq D_{ij}.
\end{aligned}
\end{eqnarray}
Given $\bar{t}\in\reals$ and $\mu>0$, define $f:\reals \rightarrow \reals$ such that $f(t)=\rho \abs{t}+\frac{\mu}{2}(t-\bar{t})^2$. %where $\rho$ and $\mu _k$ are constants and $\tilde{t}$ does not depend on $t$.
 Let $t_{u}^* := \argmin_{t \in \reals} f(t)$ and $t_{c}^* = \argmin_{t \in \reals}\{ f(t):~ a \leq t \leq b\}$. Since $f$ is convex on $\reals$, it is easy to show that
\begin{eqnarray}
\label{eq:proofS_7}
\begin{aligned}
t_{u}^* &= \sgn(\bar{t})~\max\left\{ \abs{\bar{t}}-\tfrac{\rho}{\mu},0\right\},\\
t_c^* &= \min\{b, \max\{a,t_u^*\}\}.
\end{aligned}
\end{eqnarray}
Thus, \eqref{eq:proofS_7} implies that for all $(i,j) \in \bar{\Omega}$, we have
\begin{eqnarray}
\label{eq:proofS_9}
\begin{aligned}
\quad (S_{k+1})_{ij} &= \min\left\{D_{ij},~\max\left\{-1,~c_{ij} \right\}\right\},\\
c_{ij} &= \sgn(\tilde{S}_{ij})\max\left\{\abs{\tilde{S}_{ij}}-\tfrac{\rho}{\mu_k},~0\right\}.
\end{aligned}
\end{eqnarray}
The structure of $S_{k+1}$ follows from \eqref{eq:proofS_9} and the fact that $(S_{k+1})_{ij}=0$ for all $(i,j)\in\cD\cup\Omega^c$. %where $\tilde{S}_{ij} = D_{ij} - (q(L_{k+1}))_{ij}$ for all $(i,j) \in \bar{\Omega}$.

Since $(X_{k+1},S_{k+1})\in\phi$, clearly for all $(i,j)\in\Omega$, we have $(X_{k+1})_{ij}=(D-S_{k+1})_{ij}$. Moreover, \eqref{eq:proofS_2} implies that for all $(i,j)\in\Omega^c$, we have
%By considering the results in \eqref{eq:proofS_4} and \eqref{eq:proofS_9}, the provided solution $S_{k+1}$ in \eqref{eq:Sstar} will be obtained.
%For $X_{k+1}$, we have the following results:
%\begin{eqnarray}
%\label{eq:proofS_10}
%X_{k+1} = \pi_{\Omega}(X_{k+1})+\pi_{\Omega ^c}(X_{k+1}) = \pi_{\Omega}(D-S_{k+1})+\pi_{\Omega ^c}(X_{k+1})
%\end{eqnarray}
%For any $(i,j) \in \Omega ^c$, the value of $(X_{k+1})_{ij}$ can be found by solving the following problem which is obtained from \eqref{eq:proofS_5}:
\begin{eqnarray}
\label{eq:proofS_11}
\begin{aligned}
(X_{k+1})_{ij}&=\argmin_{X_{ij}\geq 0}\left(X_{ij}-(Q_k^L)_{ij}\right)^2,\\
&=\max \{ (Q_k^L)_{ij}, 0 \}.
\end{aligned}
\end{eqnarray}
\end{proof}
\section{Numerical results}
\label{sec:numerical}
In Section~\ref{sec:RPCA}, we compared our formulation~\eqref{eq:new_tr} with the robust PCA formulation~\eqref{eq:rpca_p}, which is adopted by Chen et al. in~\cite{Jalali14}. Numerical results show that our formulation is more tighter, and is able to recover many clusters which cannot be detected using the methodology given in~\cite{Jalali14}. Next, in Section~\ref{sec:modularity_max}, we compared \admip~with \texttt{Louvain} method, which is based on modularity maximization, on randomly generated test problems. The results show that as the number of nodes in the network increases, \texttt{Louvain} method starts merging small clusters; and this phenomena becomes more apparent when the variation among cluster sizes increases. The empirical results presented in Section~\ref{sec:modularity_max} indeed confirm that resolution limit~\cite{Santo07_pre,Santo12_phyr} becomes a major drawback for modularity maximization.
\subsection{Random network generation.}
\label{sec:generation}
In this section we describe the random network generation used in our experiments. Let $\cG=(\cN,\cE)$ be a random undirected network, and $\{\cN_\ell\}_{\ell=1}^r$, a partition of $\cN$, be the underlying clustering in $\cG$ chosen such that %for $1\leq \ell\leq r$
\begin{equation*}
\cN_{\ell}:=\left\{\sum_{i=1}^{\ell-1}n_i+1,\ldots,\sum_{i=1}^{\ell}n_i\right\}, \quad \forall \ell\in\{1,\ldots,r\},
\end{equation*}
where $n_\ell:=|\cN_\ell|$ denote the size of $\ell$-th cluster. Let $0<\alpha\leq 1$ be a parameter that will control the variation among cluster sizes $\{n_\ell\}_{\ell=1}^r$. Note that $n = \sum_{\ell=1}^r \frac{1-\alpha}{1-\alpha^r}~n\alpha ^{\ell-1}$. Given $x>0$, let $[x]$ denote the nearest integer to $x$. The cluster sizes are chosen as
\begin{equation}
\label{eq:size}
n_\ell = \left[\frac{1-\alpha}{1-\alpha^r}~n\alpha ^{\ell-1}\right],\quad \forall \ell\in\{1,\ldots,r\}.
 %\quad \hbox{and}\quad
%n_r &=n-\sum_{\ell=1}^{r-1}n_\ell. \nonumber
\end{equation}
%Given $n$, we only consider $\alpha$ values such that $n_r\geq 1$.
In our experiments, we choose $r=\lceil0.05 n\rceil$. For instance, suppose $n=100$, then the total number of clusters is $r= 5$; Table~\ref{$alpha$-table} displays the size of each cluster for different values of $\alpha$. For the sake of simplicity, assume that $n=\sum_{\ell=1}^r n_\ell$, and $n_\ell>0$ for all $\ell=1,\ldots,r$; otherwise, we define $\cN_\ell$ only for $\ell$ such that $n_\ell>0$, and reset $r=\big|\{\ell:\ n_\ell>0\}\big|$.
\begin{table}[h!]
\label{$alpha$-table}
\begin{center}
%\begin{tabular}{l}
%\multicolumn{1}{c}{\bf SIZE}\\
%\\ \hline \\
{\scriptsize
\begin{tabular}{c rrrrr}
$\alpha$ & $n_1$ & $n_2$& $n_3$& $n_4$& $n_5$\\ \hline
\textbf{1}    &20 &20 &20 &20 &20 \\
\textbf{0.9}  &24 &22 &20 &18 &16\\
\textbf{0.8}  &30 &24 &19 &15 &12\\
\textbf{0.7} &36 &25 &18 &12 &9\\
\textbf{0.6} &43 &26 &16 &9  &6\\
\textbf{0.5} &52 &26 &13 &6  &3\\
\end{tabular}
}
%\end{tabular}
\end{center}
\vspace{-0.5cm}
\caption{Cluster sizes for different values of $\alpha$ when $n=100$ and $r=5$.}
\end{table}

In the next step, after we choose the underlying clustering $\{\cN_\ell\}_{\ell=1}^r$ as above, we generated the edges in $\cG$ as follows. Let $\cU=\{(i,j)\in\cN\times\cN:\ i<j\}$, and $\cE^U\subset\cU$ be such that $|\cE^U|=\big[0.05 |\cU|\big]$ elements are randomly chosen with equal probability; define $\cE^L:=\{(i,j): (j,i)\in\cE^L\}$, and
$\bar{\cE}:=\{(i,j)\in\cN\times\cN:\ \exists\ell\in\{1,\ldots,r\} \hbox{ s.t. } i\in\cN_\ell,~j\in\cN_\ell\}$. Then we set $\cE$ as the symmetric difference of $\bar{\cE}$ and $\cE^U\cup\cE^L$, i.e., $\cE:=\bar{\cE}\Delta(\cE^U\cup\cE^L)$. Note that $\cG=(\cN,\cE)$ generated this way satisfies Assumption~\ref{Assum:1}.

Let $D\in\mathbb{S}_n$ be the node-node incidence matrix corresponding to $\cG$ such that $\diag(D)=\mathbf{1}$, i.e., $D_{ij}=1$ if $(i,j)\in\cE$ or $i=j$, and $D_{ij}=0$ otherwise. Let $\Omega\subset\cN\times\cN$ be the set of indices corresponding to the observable entries of $D$. Note that according to Assumption~\ref{Assum:1}, for any given $i\in\cN$ and $j\in\cN$, whether $i$ and $j$ are connected by an edge in $\cE$ or not is known with probability $p_0$. Hence, to generate $\Omega$, let $\Omega^U$ be the set of $N_o:=\left[\frac{p_0(n^2-n)}{2}\right]$ indices of the upper triangular entries of $D$ chosen uniformly at random, i.e., $\Omega^U\subset\cU$ such that $|\Omega^U|=N_o$. Since $D$ is symmetric, the symmetric lower triangular elements $\Omega^L:=\{(i,j):\ (j,i)\in\Omega^U\}$ should be in $\Omega$; and it's also known that the diagonal entries are all ones. Therefore, we set $\Omega:=\Omega^U\cup\Omega^L\cup\cD$.
\subsection{Initialization and stopping criterion.}
\label{sec:initialization}
In all the experiments, we set $\rho=\frac{1}{\sqrt{n}}$, $L_0 = \mathbf{0}_n$, $\mu_0 = \frac{1.25}{\norm{\pi_{\Omega}(D)}_2}$, and $Y_0 = \frac{\pi_{\Omega}(D)}{\max\left\{ \norm{\pi_{\Omega}(D)}_2,~\rho^{-1}\norm{\pi_{\Omega}(D)}_{\infty} \right\}}$ in \admip. %More details about $Y_0$ initialization can be found in~\cite{Lin2013_arxiv}.
The penalty multiplier sequence $\{ \mu_{k} \}$ is chosen such that $\mu_{k+1} = \min \{ \kappa \mu _k, \bar{\mu}\}$ for $k \geq 1$, where $\bar{\mu} = 10^7$ and $\kappa = 1.2$. We terminate the algorithm when the following primal-dual stopping conditions hold: %, or the maximum number of iterations ($\mathrm{maxIter}=1000$) is reached:
\begin{equation}
\label{eq:tol}
\norm{L_{k+1}-X_{k+1}}_F \leq \mathbf{tol}_p,
~\frac{\mu_k \norm{L_{k+1}-L_{k}}_F}{\norm{\proj{D}}_F} \leq \mathbf{tol}_d,
\end{equation}
where $\mathbf{tol}_p = \epsilon _r \max\{\norm{L_{k+1}}_F,\norm{X_{k+1}}_F\}$, $\mathbf{tol}_d = \epsilon _r \norm{Y_{k+1}}_F$, and $\epsilon _r=5\times 10^{-4}$. The first equation gives the primal stopping criterion and the second one gives the dual stopping criterion. For more details about the stopping criteria refer to \cite{Boyd11}.
%If the low rank matrix which is not BDO, then we threshold the elements of $L^*$, and construct $C\in\mathbb{S}_n$ such that $C_{ij}=1$ if $L^*_{ij}\geq \epsilon$, and $C_{ij}=0$ otherwise, where $\epsilon = 0.55$. Then we output the clustering if $C$ is BDO; otherwise, we declare failure.
\subsection{Results}
All the numerical experiments were conducted on a Windows 7 machine with Intel Core i7-3520M Processor (4 MB cash, 2 cores at 2.9 GHz), and 16 GB RAM running MATLAB 8.2 (64 bit).
%HP ProLiant SL390s G7 1U server with Intel Xeon X5675 Six-Core 3.06 GHz processor using MATLAB 2013a (64 bit). %is used to run all the algorithms.
We consider two different cases. In the first case, we assume that $\cE$ is perfectly known, i.e., all the entries of $D$ are observed. In the second case, we assume that $\cE$ is partially observable, i.e., we only know the entries of $D$ corresponding to indices in $\Omega$. For both cases, we compared \admip~with the method proposed in~\cite{Jalali14} and with \texttt{Louvain} method for different values of $(n,\alpha)$.
\subsubsection{ADMIPC vs ADMM on RPCA}
\label{sec:RPCA}
First, for $\rho = \frac{1}{\sqrt{n}}$, we compare our formulation, given in \eqref{eq:new_tr}, with the robust PCA~(RPCA) formulation in~\eqref{eq:rpca_p} to check whether the proposed formulation \eqref{eq:new_tr} is tighter than \eqref{eq:rpca_p}. In particular, given randomly generated networks as described in Section~\ref{sec:generation}, we solve \eqref{eq:new_tr} using \admip~and compare the results with those obtained by solving \eqref{eq:rpca_p} %with $\rho = \frac{1}{\sqrt{n}}$
using a modified version of IALM in~\cite{Lin2013_arxiv}. IALM is nothing but an increasing penalty \admm~method customized for \eqref{eq:rpca}. Note that IALM~\cite{Lin2013_arxiv} works when $D$ is fully observed, and it does not work on \eqref{eq:rpca_p}. However, Theorem~1.1 in~\cite{Aybat14_COAP} shows that \eqref{eq:rpca_p} is equivalent to
\begin{equation}
\label{eq:rpca_p_equiv}
%\begin{aligned}
\min_{L,S\in\reals^{n\times n}} \{\norm{L}_{*} + \rho \norm{\pi_{\Omega}(S)}_1:~L+S=\pi_{\Omega} (D) \};
%\end{aligned}
\end{equation}
and one can easily modify IALM~\cite{Lin2013_arxiv} to solve \eqref{eq:rpca_p_equiv}. We call the modified version as \mialm$(\rho)$. The results presented in this section show that, for $\rho=1/\sqrt{n}$, our formulation \eqref{eq:new_tr} is indeed tighter than the RPCA formulation \eqref{eq:rpca_p}.

Next, we compared \admip~with the method developed in~\cite{Jalali14}, which is based on RPCA formulation \eqref{eq:rpca_p}. We call the method in~\cite{Jalali14} as RPCA with bisection~(\rpcab). \rpcab~calls \mialm~on \eqref{eq:rpca_p} for changing values of $\rho$. In particular, for a given $\rho>0$, \rpcab~calls \mialm~to compute $L^*_\rho$, the optimal low-rank component to \eqref{eq:rpca_p}. Next, if $\Tr(L^*_\rho)\neq n$, then \rpcab~updates $\rho$ as follows: when $\Tr(L^*_\rho)>n$, $\rho\gets\rho/2$; otherwise, $\rho\gets2\rho$. After $\rho$ is updated, \rpcab~calls \mialm~on \eqref{eq:rpca_p} with the new $\rho$ value. %As mentioned in \cite{Jalali14}, the initial value of $\rho$ is not crucial for RPCAB.
In all the numerical tests we set the initial value of $\rho = \frac{1}{\sqrt{n}}$. Based on the discussion in \cite{Boyd11} on stopping criteria for \admm, the dual stopping criterion for \mialm~is chosen as in \eqref{eq:tol} such that $\mathbf{tol}_d = \epsilon _r \norm{Y_{k+1}}_F$; and the primal stopping criterion for \mialm~is chosen as $\norm{\proj{D}-(L_{k+1}+S_{k+1})}_F \leq \mathbf{tol}_p$, where $\mathbf{tol}_p = \epsilon _r \max\{\norm{L_{k+1}}_F,\norm{S_{k+1}}_F, \norm{\proj{D}}_F\}$, and $\epsilon_r = 5 \times 10^{-4}$. The stopping criterion for \rpcab~is set as $|\Tr(L^*_\rho)-n|/n\leq 0.01$.

The following two cases are considered when we compare the low-rank component output by \admip~with those generated by \mialm$(1/\sqrt{n})$ and by \rpcab. For each $n\in\{100, 200, 300, 400, 500\}$, random networks are generated as described in Section~\ref{sec:generation} for $\alpha\in\{0.6,0.7\ldots,1\}$. %are considered while the network size is fixed.
For each $(n,\alpha)$ setting, we generated 10 random graphs, and corresponding $D$. In \textbf{Case 1}, all the entries of $D$ are observed, i.e., $p_0=1$, and in \textbf{Case 2}, we assume that $\cE$ is partially observable; hence, there are unobserved entries in $D$, i.e., $p_0<1$.
%Since RPCA and RPCAB don't provide clustering unless $L^*$ is BDO, the comparison in this part is different from the one in section \ref{sec:modularity_max}.

Let $\bar{L}$ represent the underlying clustering in $\cG$, i.e., $\bar{L}=D-\bar{S}$ for $\bar{S}$ defined in \eqref{eq:Sbar}, and $L^*$ is the optimal low-rank component computed by one of the algorithms mentioned above. By definition, $\bar{L}$ is BDO with $r$ diagonal blocks, each of size $n_{\ell}\times n_\ell$ for $\ell=1,...,r$. For $\ell_1,\ell_2\in\{1,...,r\}$, define matrices $\bar{B}_{\ell_1,\ell_2} = \left(\bar{L}_{ij}\right)_{i\in\cN_{\ell_1},j\in\cN_{\ell_2}}\in\reals^{n_{\ell_1}\times n_{\ell_2}}$, and $B^*_{\ell_1,\ell_2} = \left(L^*_{ij}\right)_{i\in\cN_{\ell_1},j\in\cN_{\ell_2}}\in\reals^{n_{\ell_1}\times n_{\ell_2}}$.
%$\cB_{\ell,t} ^* = \{ L^*_{ij}: i=n_{\ell-1}+1,...,n_{\ell}, j=n_{t}+1,...,n_t\}$.
Clearly, $\bar{\cB}_{\ell,\ell} = \mathbf{E}_{n_\ell}$ for $\ell\in\{1,\ldots,r\}$, and $\bar{B}_{\ell_1,\ell_2} = \mathbf{0}_{n_{\ell_1}\times n_{\ell_2}}$ if $\ell_1\neq\ell_2$, where $\mathbf{E}_n\in\reals^{n \times n}$ is a matrix of ones. For all $1\leq \ell_1,\ell_2\leq r$, define $R_{\ell_1,\ell_2}:=\norm{\bar{B}_{\ell_1,\ell_2}-B_{\ell_1,\ell_2}^*}_F$. Given a random graph corresponding to $(n,\alpha)$, for each algorithm \admip, \mialm$(1/\sqrt{n})$, and \rpcab, we compute five different statistics. The first three statistics are the maximum, minimum and average of $\left\{\frac{R_{\ell,\ell}}{n_\ell}\right\}_{\ell\in\{1,\ldots,r\}}$, and are denoted by $s_{\max}$, $s_{\min}$, and $s_{\mathrm{av}}$, respectively. The fourth statistic is $s_{\mathrm{off}}:=\frac{\sqrt{\sum_{(\ell_1,\ell_2):\ell_1 < \ell_2}R_{\ell_1,\ell_2}^2}}{\sqrt{\sum_{(\ell_1,\ell_2):\ell_1 < \ell_2}n_{\ell_1} n_{\ell_2}}}$. These first four statistics show how close $L^*$ to the true clustering encoded by the BDO matrix $\bar{L}$. The fifth statistic $s_\mathrm{f}$ is about the fraction of clusters recovered correctly. For $\ell\in\{1,\ldots,r\}$, define
\begin{equation}
\label{eq: FN}
E_{\ell} := \frac{R_{\ell,\ell}}{n_{\ell}},\quad E_{\ell}^c := \frac{\sqrt{\sum_{t: t\neq\ell}R_{\ell,t}^2}}{\sqrt{\sum_{t:t\neq \ell}n_\ell n_t}}.
\end{equation}
We call cluster $\ell$ ``recovered" if $E_{\ell} < \tau_1$ and $E_{\ell}^c < \tau_2$. We set $\tau_1 = 0.4$ and $\tau_2 = 0.1$ for all three algorithms. Let $\bar{r}$ denote the number of recovered clusters, i.e., $\bar{r}:=|\{\ell:\ E_{\ell} < \tau_1,\ E_{\ell}^c < \tau_2\}|$. The fifth statistic reported is $s_\mathrm{f}:=\frac{\bar{r}}{r}$. Note that all five statistics take values in $[0,1]$ interval.

%\red{Like the comparison in section \ref{sec:modularity_max}, two cases are considered:}
Given $(n,\alpha)$ and $p_0\in\{1,0.9,0.8\}$, the underlying clustering of each 10 random graphs are estimated using \admip, \mialm, and \rpcab. Table~\ref{tab:exp1_p1}, Table~\ref{tab:exp1_p09}, and Table~\ref{tab:exp1_p08} report the averages of 5 statistics: $s_{\max},s_{\min},s_{\mathrm{av}},s_{\mathrm{off}},s_\mathrm{f}$ over the 10 instances for $p_0=1$, $p_0=0.9$, and $p_0=0.8$, respectively. Numerical results show that increasing $n$, and/or decreasing $\alpha$ %and/or decreasing $p_0$
adversely affect the performances of all three methods. However, the negative impact is more serious for \mialm~and \rpcab. Indeed, the results corresponding to $s_\mathrm{f}$ statistic show that while \admip~can detect more than $\%90$ of clusters all the time, $s_\mathrm{f}$ values for \mialm, and \rpcab~decrease significantly (there are many instances for which $s_\mathrm{f}$ is $0$) when $n$ and $\alpha$ change as discussed above. By investigating the results carefully, we see that usually the large values of $E_{\ell}$ cause the failure of \admip~and \mialm. But for \rpcab, the failure is mainly due to $E_{\ell}^c > \tau_2$. Although $E_{\ell}^c$ are not reported, one can drive this result by comparing $s_{\mathrm{off}}$ values corresponding to different scenarios. Let $\rho_0=1/\sqrt{n}$. Indeed, all most all the time when $\alpha<0.9$ and/or $n\geq 300$, low-rank component of \mialm$(\rho_0)$ solution, $L^*_{\rho_0}$, violates $\Tr(L^*_{\rho_0})=n$ condition, which causes $E_\ell\approx 1$, i.e., $(L^*_{\rho_0})_{ij}\approx 0$ for all $i,j\in\cN_\ell$, for all $\ell$ such that $n_\ell$ is \emph{small}; hence, \mialm$(\rho_0)$ \emph{cannot detect small size clusters}. To overcome this issue, Chen et al.~\cite{Jalali14} proposed bisection on $\rho$. Although this approach reduces the error $E_\ell$ significantly, it causes some entries $(L^*_\rho)_{ij}\approx 1$ such that $i\in\cN_{\ell_1}$, $j\in\cN_{\ell_2}$ and $\ell_1\neq\ell_2$; hence, \emph{merging two different clusters}. Intuitively, the reason why our model in \eqref{eq:new_tr} works better is that it considers both type of errors at the same time in a more tighter formulation than RPCA in \eqref{eq:rpca_p}.

Next, we compared cpu times required for \admip~and \rpcab~to terminate on randomly generated networks as in Section~\ref{sec:generation} with $n\in\{500, 1000\}$, $\alpha=0.95$ and $p_0=0.9$. For each $n$, we generated 5 instances; Table~\ref{tab:cpu} displays the averages of $\mathbf{cpu}$, $\mathbf{svd}$ and $\mathbf{iter_B}$ statistics over 5 instances, where $\mathbf{cpu}$, $\mathbf{svd}$ and $\mathbf{iter_B}$ denote runtime (in seconds), total number of SVD computations, and the number of \mialm~calls within \rpcab, respectively.
\subsubsection{ADMIPC vs Modularity Maximization}
\label{sec:modularity_max}
In this section, we compare \admip~with \texttt{Louvain} method~\cite{Louvain08_stat}. Let $\tau_d = 0.05$ and $L^*$ be the optimal low-rank component computed by \admip. If $ \abs{L^* _{ii}-1} > \tau_d$ for some $i=1,...,n$, we declare failure; otherwise, $T^*\in\mathbb{S}_n$ is constructed as follows: $T^*_{ij}=1$ if $L^*_{ij}\geq \bar{\tau}$, and $T^*_{ij}=0$ otherwise, where $\bar{\tau} = 0.55$. Based on $T^*$, we put nodes $i$ and $j$ in the same cluster if $T^*_{ij} = 1$. %to account for the  find the clusters with ADMIPC when $L^*$ is not BDO,
We compare the clusterings generated by \admip, and \texttt{Louvain} method with the ground truth using three different measures of similarity:  %The first two measures have been used before. The last measure indicates the portion of clusters exactly recovered.
%The following three measures are used:
Jaccard index, normalized mutual information~($\mathrm{NMI_{SG}}$) using %normalization introduced by
Strehl and Ghosh normalization~\cite{Strehl03}, and portion of exactly recovered clusters~(PERC). All three measures take values in $[0,1]$ interval, and values close to 1 correspond to desirable clusterings. Let $\cG=(\cN,\cE)$ denote the network, $\cC=\{\cN_i\}_{i=1}^r$, which is a partition of $\cN$, represent the ground truth, and $\cC'=\{\cN'_j\}_{j=1}^{r'}$ represent the clustering computed by an algorithm.
\begin{enumerate}
\item \textbf{Jaccard index}: Let $a$ be the number of node pairs that belong to the same clusters in both $\cC$ and $\cC^{\prime}$, $b$ be the number of pairs that are in the same cluster in $\cC$ but in different clusters in $\cC^{\prime}$, and $c$ be the number of pairs that are in the same cluster in $\cC'$ but in different clusters in $\cC$. The Jaccard's index is defined as $\frac{a}{a+b+c}$. It %is a measure based on counting pairs, and it
has many applications in geology and ecology~\cite{Wagner07}; but it is %Jaccard index is sensitive to differences between different solutions when there are some severe problems in the clustering solution
a sensitive measure~\cite{Mill85}. %These problems are introduced in~\cite{Mill85}.
The Jaccard's index is in $[0,1]$ interval. It is 1 when $\cC$ and $\cC^{\prime}$ are exactly the same, and equal to 0 when there is no common pair classified in the same cluster in both $\cC$ and $\cC^{\prime}$.
\item \textbf{Normalized Mutual Information (NMI)}: This measure is  based on information theory, and it quantifies the reduction in our uncertainty about one cluster if we know the other one~\cite{Vinh10_ML}. %Let $C_i \in \cC$ for $i=1,...,m$ and $C_j \in \cC^{\prime}$ for $j=1,...,l$.
    Define $n_i := \abs{\cN_i}$ for $i=1,\ldots,r$, $n'_j := \abs{\cN'_j}$ for $j=1,\ldots,r'$, and $m_{ij} = \abs{\cN_i \cap \cN'_j}$ for all $i,j$. Then the mutual information $\cI(\cC,\cC^{\prime}) := \sum_{i=1}^{r}\sum_{j=1}^{r'} \frac{m_{ij}}{n} \log_2\left(\frac{m_{ij}/n}{n_i n'_j /n^2}\right)$. By normalizing the mutual information, we force it to be between fixed ranges as well as improving its sensitivity~\cite{Vinh10_ML,Wu09}. Let $\cH (\cC) := - \sum_{i=1}^r \frac{n_i}{n}\log_2\left(\frac{n_i}{n}\right)$ represent the entropy associated with clustering $\cC$ and $\cH (\cC^{\prime}) := - \Sigma_{j=1}^{r'}\frac{n'_j}{n}\log_2\left(\frac{n'_j}{n}\right)$ represent the entropy associated with clustering $\cC^{\prime}$. There are different ways of normalizing but we use the method introduced by Strehl and Ghosh~\cite{Strehl03}. In this method, $\mathrm{NMI_{SG}} = \frac{\cI(\cC,\cC^{\prime})}{\sqrt{\cH (\cC)\cH (\cC^{\prime})}} $ is between 0 and 1. %When the two clustering results are identical, NMI is equal to 1 and when they are independent or they don't share any information with each other, NMI is equal to 0.
%We used two different methods to normalize the mutual information. In the second method of normalization~\cite{Kvalseth87}, $\cN \cM \cI_{K} = \frac{2\cI(\cC,\cC^{\prime})}{\cH (\cC)+\cH (\cC^{\prime}}$. In both of these measures, the result is between 0 and 1. When the two clustering results are identical, NMI is equal to 1 and when they are independent or they don't share any information with each other, NMI is equal to 0.
\item \textbf{Portion of Exactly Recovered Clusters (PERC)}: This measure is a secondary measure and we introduced it to make the comparison in case of tightness in the other measures. Let %$N = \abs{\cC}$ represents the total number of clusters in the ground truth and $N^{\prime}$
    $\bar{r}$ represent the total number of clusters that are both in $\cC$ and $\cC'$, i.e. the number of clusters identified by the algorithm correctly. Then PERC is equal to $\frac{\bar{r}}{r}$.
\end{enumerate}
%By considering these three measures, the two algorithms are compared as follows:
For each $n\in\{100, 200, 300, 400, 500\}$ random networks are generated as described in Section~\ref{sec:generation} for $\alpha\in\{0.5,0.6,\ldots,1\}$. %are considered while the network size is fixed.
For each $(n,\alpha)$ setting, we generated 20 random graphs, and %the corresponding $D$. %Because of the randomness in choosing the random elements, each replication is repeated 20 times for different vectors of random elements.
%After generating the data matrices,
%Then we
compute two clusterings using \admip~and \texttt{Louvain} method. When $p_0<1$, if an edge is not observable, then we set the corresponding entry to 0 in the incidence matrix $D$ for \texttt{Louvain} method. For a fixed $(n,\alpha)$ setting and $p_0\in\{1,0.9,0.8\}$, each of the three measures %, defined at the beginning of Section~\ref{sec:modularity_max},
are evaluated on the 20 clusterings generated by \admip~on 20 random instances. The mean of these values are reported in Table~\ref{tab:exp2_p1}, Table~\ref{tab:exp2_p09}, and Table~\ref{tab:exp2_p08} for $p_0=1$, $p_0=0.9$, and $p_0=0.8$, respectively. %We also compute the median of these values. Since there is no significant difference between mean and median, we only present the table of mean values.
Note that the output of \texttt{Louvain} method depends on the initial ordering of the nodes. Hence, for the same graph, this method can generate different clusterings for different orderings. Therefore, for each 20 random graphs corresponding to fixed $(n,\alpha)$, we run \texttt{Louvain} method for 200 different ordering of nodes (generated randomly such that each ordering is equally likely). %Each time we calculate the three measures . Then we compute
%The mean values for each three measures are reported similarly in Table~4, Table~5, and Table~6. %We should note that the median values are not significantly different from the mean values and hence they are not reported.
Numerical results show that our method outperforms \texttt{Louvain} method almost every time for all the three measures. It is important to note that increasing $n$ adversely affects the performance of both methods. Moreover, for \texttt{Louvain} method, the clustering quality decreases significantly as $\alpha$ decreases, i.e., the variation among the cluster sizes increases, while it is not the case for our method, and the clustering quality is not impacted significantly.
%does not show any special trend as the value of $\alpha$ changes.
Analyzing the results we find that for a fixed $n$, \texttt{Louvain} method tends to merge small clusters when $\alpha$ is small. %By increasing $\alpha$, the variation among the cardinality of clusters decreases in the ground truth and the clusters with small sizes get bigger. Therefore, the quality of Louvain method's output increases.
On the other hand, \admip~does not show any trend for the first two measures for changing $\alpha$, and almost every time correctly identifies even small size clusters. %when $n$ is fixed and $\alpha$ is changing. %But sometimes the results include some isolated nodes. These nodes either belong to different clusters or they belong to the same small cluster. If it is the first case, then the value of PERC is small. If it is the second case, the value of PERC is bigger.
When $\alpha$ is fixed and $n$ increases, the clustering performance of \texttt{Louvain} method decreases again, which agrees with the discussion on resolution limit. As $n$ increases, the number of small size clusters increases in the ground truth as well, and more clusters are merged together by \texttt{Louvain} method, while the number of isolated nodes which originally belong to different clusters increases for \admip. In summary, there are two key points which suggest that \admip~is more reliable than \texttt{Louvain} method. First, \admip~works well even for small values of $\alpha$. Second, by increasing $n$, the performance of both algorithms decrease: \texttt{Louvain} method tends to merge smaller clusters, while our algorithm generates some isolated nodes. But as discussed in~\cite{Mill85}, generating some isolated nodes is less severe than merging some clusters, which makes \admip~more reliable.
\enlargethispage{-19\baselineskip}
\section{Conclusion and Future Work}
We proposed a convex optimization model for detecting nonoverlapping clusters in a partially observed undirected networks; and developed an \admm~algorithm to solve it. Since our formulation is tighter than the robust PCA formulation proposed in~\cite{Jalali14}, we were able to find the true clustering even when the robust PCA formulation failed in our numerical tests. Moreover, our method is not sensitive to moderate changes in variance among cluster sizes, and on the randomly generated networks outperformed \texttt{Louvain} method, which maximizes the modularity and suffers from resolution limit. Extending our formulation to cluster \emph{overlapping} communities in \emph{weighted} networks is a potentially important future research direction. Due to limited space and time, we could not include computational results on real datasets; but they will soon be made available online at \url{http://www2.ie.psu.edu/aybat/codes.html}.
\bibliographystyle{siam}
\bibliography{All}
%\newpage
%\section{Supplementary Materials}
%\subsection{Proof of Lemma~\ref{Lem:1}}
%\label{sec:proof0}
%\vspace{-0.35cm}
%\subsection{Proof of Theorem~\ref{Thm:2}}
%\label{sec:proof1}
%\vspace{-0.35cm}
%\subsection{Proof of Theorem~\ref{thm:comvergence}}
%\label{sec:proof2}
%\vspace{-0.15cm}
%, and \admip~in Figure~\ref{al:ADMM} generates the same sequence of iterates, we can conclude that \admip~sequence converges to an optimal solution of \eqref{eq:parS}.
%\subsection{Proof of Lemma~\ref{Lem:2}}
%\label{sec:proof3}
%\subsection{Proof of Lemma~\ref{Lem:3}}
%\label{sec:proof4}
%\subsection{Similarity measures used in Section~\ref{sec:modularity_max}}
%\label{sec:measures}

\begin{table*}[h!]
\begin{center}
%\resizebox{16cm}{5cm}{%
{\scriptsize
\begin{tabular}{c|c| lllll | lllll | lllll}
\multicolumn{2}{c}{}
&\multicolumn{5}{c}{\admip}
&\multicolumn{5}{c}{\mialm$(1/\sqrt{n})$}
&\multicolumn{5}{c}{\rpcab}\\
\multicolumn{1}{c|}{$n$}&\multicolumn{1}{c|}{$\alpha$}
& $s_{\max}$ & $s_{\min}$ & $s_{\mathrm{av}}$ & $s_{\mathrm{off}}$ & $s_\mathrm{f}$
& $s_{\max}$ & $s_{\min}$ & $s_{\mathrm{av}}$ & $s_{\mathrm{off}}$ & $s_\mathrm{f}$
& $s_{\max}$ & $s_{\min}$ & $s_{\mathrm{av}}$ & $s_{\mathrm{off}}$ & $s_\mathrm{f}$\\
\hline
	&1	    &0.00&0.00&0.00&0.00&\textbf{1.00}&0.01&0.00&0.00&0.00&\textbf{1.00}&0.01&0.00&0.00&0.00&\textbf{1.00}\\
	&0.9	&0.00&0.00&0.00&0.00&\textbf{1.00}&0.02&0.00&0.00&0.00&\textbf{1.00}&0.02&0.00&0.00&0.00&\textbf{1.00}\\
100	&0.8	&0.00&0.00&0.00&0.00&\textbf{1.00}&0.29&0.00&0.06&0.00&\textbf{0.90}&0.02&0.00&0.00&0.00&\textbf{1.00}\\
	&0.7	&0.02&0.00&0.00&0.00&\textbf{1.00}&1.00&0.00&0.25&0.00&\textbf{0.78}&0.04&0.02&0.02&0.02&\textbf{0.90}\\
	&0.6	&0.03&0.00&0.01&0.00&\textbf{1.00}&1.00&0.00&0.40&0.00&\textbf{0.60}&0.15&0.03&0.06&0.04&\textbf{0.80}\\
	\hline
	&1	    &0.00&0.00&0.00&0.00&\textbf{1.00}&0.10&0.00&0.04&0.00&\textbf{1.00}&0.02&0.00&0.01&0.00&\textbf{1.00}\\
	&0.9	&0.01&0.00&0.00&0.00&\textbf{1.00}&1.00&0.00&0.36&0.00&\textbf{0.61}&0.02&0.00&0.00&0.00&\textbf{1.00}\\
200	&0.8	&0.10&0.00&0.01&0.00&\textbf{1.00}&1.00&0.00&0.48&0.00&\textbf{0.50}&0.29&0.04&0.20&0.20&\textbf{0.00}\\
	&0.7	&0.28&0.00&0.05&0.00&\textbf{0.99}&1.00&0.00&0.58&0.00&\textbf{0.40}&0.35&0.02&0.18&0.20&\textbf{0.00}\\
	&0.6	&0.30&0.00&0.04&0.00&\textbf{0.98}&1.00&0.00&0.61&0.00&\textbf{0.40}&0.44&0.00&0.17&0.20&\textbf{0.00}\\
	\hline
	&1	    &0.01&0.00&0.00&0.00&\textbf{1.00}&0.80&0.38&0.57&0.00&\textbf{0.06}&0.00&0.00&0.00&0.00&\textbf{1.00}\\
	&0.9	&0.10&0.00&0.01&0.00&\textbf{1.00}&1.00&0.00&0.57&0.00&\textbf{0.41}&0.28&0.08&0.20&0.20&\textbf{0.00}\\
300	&0.8	&0.43&0.00&0.06&0.00&\textbf{0.96}&1.00&0.00&0.62&0.00&\textbf{0.38}&0.41&0.01&0.19&0.20&\textbf{0.00}\\
	&0.7	&0.37&0.00&0.05&0.00&\textbf{0.98}&1.00&0.00&0.67&0.00&\textbf{0.33}&0.55&0.00&0.17&0.20&\textbf{0.00}\\
	&0.6	&0.36&0.00&0.04&0.00&\textbf{0.97}&1.00&0.00&0.64&0.00&\textbf{0.36}&0.44&0.01&0.18&0.20&\textbf{0.00}\\
	\hline
	&1	    &0.05&0.00&0.00&0.00&\textbf{1.00}&1.00&0.97&1.00&0.00&\textbf{0.00}&0.02&0.00&0.00&0.00&\textbf{1.00}\\
	&0.9	&0.22&0.00&0.02&0.00&\textbf{1.00}&1.00&0.00&0.65&0.00&\textbf{0.34}&0.31&0.01&0.19&0.20&\textbf{0.00}\\
400	&0.8	&0.47&0.00&0.07&0.00&\textbf{0.95}&1.00&0.00&0.70&0.00&\textbf{0.30}&0.56&0.00&0.19&0.20&\textbf{0.00}\\
	&0.7	&0.49&0.00&0.08&0.00&\textbf{0.94}&1.00&0.00&0.69&0.00&\textbf{0.31}&0.56&0.00&0.22&0.20&\textbf{0.00}\\
	&0.6	&0.35&0.00&0.06&0.00&\textbf{0.98}&1.00&0.00&0.65&0.00&\textbf{0.33}&0.55&0.02&0.21&0.20&\textbf{0.00}\\
	\hline
	&1	    &0.12&0.00&0.01&0.00&\textbf{1.00}&1.00&1.00&1.00&0.00&\textbf{0.00}&0.07&0.00&0.00&0.00&\textbf{1.00}\\
	&0.9	&0.31&0.00&0.04&0.00&\textbf{0.99}&1.00&0.00&0.70&0.00&\textbf{0.31}&0.36&0.01&0.19&0.20&\textbf{0.00}\\
500	&0.8	&0.53&0.00&0.08&0.00&\textbf{0.90}&1.00&0.00&0.72&0.00&\textbf{0.29}&0.57&0.00&0.20&0.20&\textbf{0.00}\\
	&0.7	&0.47&0.00&0.08&0.00&\textbf{0.94}&1.00&0.00&0.65&0.00&\textbf{0.35}&0.54&0.01&0.22&0.20&\textbf{0.00}\\
	&0.6	&0.43&0.00&0.07&0.00&\textbf{0.94}&1.00&0.00&0.59&0.00&\textbf{0.42}&0.53&0.01&0.22&0.20&\textbf{0.00}\\
\end{tabular}
\caption{The mean values for 5 statistics for $p_0 =1$}
\label{tab:exp1_p1}
\vspace{2cm}
\begin{tabular}{c|c| lllll | lllll | lllll}
\multicolumn{2}{c}{}
&\multicolumn{5}{c}{\admip} &\multicolumn{5}{c}{\mialm$(1/\sqrt{n})$} &\multicolumn{5}{c}{\rpcab}\\
\multicolumn{1}{c|}{$n$}&\multicolumn{1}{c|}{$\alpha$}
& $s_{\max}$ & $s_{\min}$ & $s_{\mathrm{av}}$ & $s_{\mathrm{off}}$ & $s_\mathrm{f}$
& $s_{\max}$ & $s_{\min}$ & $s_{\mathrm{av}}$ & $s_{\mathrm{off}}$ & $s_\mathrm{f}$
& $s_{\max}$ & $s_{\min}$ & $s_{\mathrm{av}}$ & $s_{\mathrm{off}}$ & $s_\mathrm{f}$\\
\hline
    &1	    &0.00&0.00&0.00&0.00&\textbf{1.00}&0.04&0.00&0.01&0.00&\textbf{1.00}&0.04&0.00&0.01&0.00&\textbf{1.00}\\
	&0.9	&0.00&0.00&0.00&0.00&\textbf{1.00}&0.11&0.00&0.03&0.00&\textbf{1.00}&0.03&0.00&0.01&0.00&\textbf{1.00}\\
100	&0.8	&0.00&0.00&0.00&0.00&\textbf{1.00}&0.72&0.00&0.16&0.00&\textbf{0.82}&0.00&0.00&0.00&0.00&\textbf{1.00}\\
	&0.7	&0.03&0.00&0.01&0.00&\textbf{1.00}&1.00&0.00&0.34&0.00&\textbf{0.64}&0.07&0.00&0.02&0.00&\textbf{1.00}\\
	&0.6	&0.08&0.00&0.02&0.00&\textbf{1.00}&1.00&0.00&0.40&0.00&\textbf{0.60}&0.21&0.05&0.11&0.10&\textbf{0.50}\\
	\hline
	&1	    &0.00&0.00&0.00&0.00&\textbf{1.00}&0.39&0.05&0.18&0.00&\textbf{0.94}&0.00&0.00&0.00&0.00&\textbf{1.00}\\
	&0.9	&0.01&0.00&0.00&0.00&\textbf{1.00}&1.00&0.00&0.48&0.00&\textbf{0.51}&0.02&0.00&0.00&0.00&\textbf{1.00}\\
200	&0.8	&0.18&0.00&0.03&0.00&\textbf{0.99}&1.00&0.00&0.53&0.00&\textbf{0.45}&0.32&0.12&0.21&0.19&\textbf{0.00}\\
	&0.7	&0.33&0.00&0.06&0.00&\textbf{0.98}&1.00&0.00&0.61&0.00&\textbf{0.40}&0.39&0.06&0.21&0.19&\textbf{0.00}\\
	&0.6	&0.42&0.00&0.06&0.01&\textbf{0.93}&1.00&0.00&0.66&0.00&\textbf{0.34}&0.65&0.02&0.25&0.19&\textbf{0.00}\\
	\hline
	&1	    &0.03&0.00&0.00&0.00&\textbf{1.00}&1.00&0.75&0.93&0.00&\textbf{0.00}&0.02&0.00&0.00&0.00&\textbf{1.00}\\
	&0.9	&0.19&0.00&0.03&0.00&\textbf{1.00}&1.00&0.00&0.65&0.00&\textbf{0.33}&0.32&0.12&0.21&0.19&\textbf{0.00}\\
300	&0.8	&0.40&0.00&0.08&0.00&\textbf{0.97}&1.00&0.00&0.67&0.00&\textbf{0.33}&0.47&0.02&0.22&0.19&\textbf{0.00}\\
	&0.7	&0.44&0.00&0.07&0.01&\textbf{0.95}&1.00&0.00&0.69&0.00&\textbf{0.31}&0.62&0.00&0.22&0.19&\textbf{0.00}\\
	&0.6	&0.32&0.00&0.05&0.00&\textbf{0.99}&1.00&0.00&0.64&0.00&\textbf{0.36}&0.55&0.00&0.22&0.19&\textbf{0.00}\\
	\hline
	&1	    &0.10&0.00&0.01&0.00&\textbf{1.00}&1.00&1.00&1.00&0.00&\textbf{0.00}&0.06&0.00&0.00&0.00&\textbf{1.00}\\
	&0.9	&0.33&0.00&0.06&0.00&\textbf{0.98}&1.00&0.00&0.71&0.00&\textbf{0.30}&0.39&0.10&0.23&0.19&\textbf{0.00}\\
400	&0.8	&0.49&0.00&0.10&0.01&\textbf{0.94}&1.00&0.00&0.71&0.00&\textbf{0.30}&0.49&0.01&0.20&0.20&\textbf{0.00}\\
	&0.7	&0.56&0.00&0.11&0.00&\textbf{0.90}&1.00&0.00&0.69&0.00&\textbf{0.31}&0.60&0.00&0.23&0.19&\textbf{0.00}\\
	&0.6	&0.45&0.00&0.09&0.00&\textbf{0.95}&1.00&0.00&0.67&0.00&\textbf{0.33}&0.55&0.00&0.23&0.19&\textbf{0.00}\\
	\hline
	&1	    &0.19&0.00&0.02&0.00&\textbf{1.00}&1.00&1.00&1.00&0.00&\textbf{0.00}&0.08&0.00&0.01&0.00&\textbf{1.00}\\
	&0.9	&0.39&0.00&0.05&0.00&\textbf{0.98}&1.00&0.00&0.74&0.00&\textbf{0.26}&0.48&0.08&0.24&0.19&\textbf{0.00}\\
500	&0.8	&0.51&0.00&0.10&0.00&\textbf{0.92}&1.00&0.00&0.74&0.00&\textbf{0.25}&0.70&0.00&0.24&0.19&\textbf{0.00}\\
	&0.7	&0.54&0.00&0.10&0.00&\textbf{0.91}&1.00&0.00&0.69&0.00&\textbf{0.31}&0.62&0.01&0.25&0.19&\textbf{0.00}\\
	&0.6	&0.54&0.00&0.09&0.00&\textbf{0.93}&1.00&0.00&0.65&0.00&\textbf{0.33}&0.63&0.00&0.25&0.19&\textbf{0.00}\\
\end{tabular}
\caption{The mean values for 5 statistics for $p_0 =0.9$}
\label{tab:exp1_p09}
}
\end{center}
\vspace{-0.5cm}
\end{table*}

\begin{table*}[th]
\begin{center}
%\resizebox{16cm}{5cm}{
{\scriptsize
\begin{tabular}{c|c| lllll | lllll | lllll}
%\cline{3-6} \\
\multicolumn{2}{c}{}
&\multicolumn{5}{c}{\admip}
&\multicolumn{5}{c}{\mialm$(1/\sqrt{n})$}
&\multicolumn{5}{c}{\rpcab}\\
\multicolumn{1}{c|}{$n$}&\multicolumn{1}{c|}{$\alpha$}
& $s_{\max}$ & $s_{\min}$ & $s_{\mathrm{av}}$ & $s_{\mathrm{off}}$ & $s_\mathrm{f}$ & $s_{\max}$ & $s_{\min}$ & $s_{\mathrm{av}}$ & $s_{\mathrm{off}}$ & $s_\mathrm{f}$ & $s_{\max}$ & $s_{\min}$ & $s_{\mathrm{av}}$ & $s_{\mathrm{off}}$ & $s_\mathrm{f}$\\
\hline
&1	    &0.00&0.00&0.00&0.00&\textbf{1.00}&0.10&0.00&0.04&0.00&\textbf{1.00}&0.01&0.00&0.00&0.00&\textbf{1.00}\\
	&0.9	&0.00&0.00&0.00&0.00&\textbf{1.00}&0.29&0.00&0.08&0.00&\textbf{0.96}&0.00&0.00&0.00&0.00&\textbf{1.00}\\
100	&0.8	&0.02&0.00&0.00&0.00&\textbf{1.00}&1.00&0.00&0.28&0.00&\textbf{0.72}&0.03&0.00&0.01&0.00&\textbf{1.00}\\
	&0.7	&0.02&0.00&0.00&0.00&\textbf{1.00}&1.00&0.00&0.40&0.00&\textbf{0.60}&0.09&0.03&0.05&0.04&\textbf{0.80}\\
	&0.6	&0.15&0.00&0.04&0.00&\textbf{1.00}&1.00&0.00&0.43&0.00&\textbf{0.60}&0.29&0.11&0.18&0.15&\textbf{0.20}\\
	\hline
	&1	    &0.01&0.00&0.00&0.00&\textbf{1.00}&0.88&0.35&0.64&0.00&\textbf{0.07}&0.00&0.00&0.00&0.00&\textbf{1.00}\\
	&0.9	&0.11&0.00&0.02&0.00&\textbf{1.00}&1.00&0.00&0.58&0.00&\textbf{0.42}&0.16&0.03&0.06&0.03&\textbf{0.80}\\
200	&0.8	&0.30&0.00&0.06&0.00&\textbf{0.99}&1.00&0.00&0.61&0.00&\textbf{0.40}&0.37&0.14&0.23&0.18&\textbf{0.00}\\
	&0.7	&0.37&0.00&0.09&0.01&\textbf{0.96}&1.00&0.00&0.63&0.00&\textbf{0.37}&0.46&0.09&0.25&0.18&\textbf{0.00}\\
	&0.6	&0.50&0.00&0.09&0.01&\textbf{0.92}&1.00&0.00&0.70&0.00&\textbf{0.30}&0.61&0.02&0.24&0.18&\textbf{0.00}\\
	\hline
	&1	    &0.10&0.00&0.02&0.00&\textbf{1.00}&1.00&0.99&1.00&0.00&\textbf{0.00}&0.05&0.00&0.01&0.00&\textbf{1.00}\\
	&0.9	&0.26&0.00&0.06&0.00&\textbf{1.00}&1.00&0.00&0.72&0.00&\textbf{0.29}&0.34&0.14&0.22&0.17&\textbf{0.00}\\
300	&0.8	&0.62&0.00&0.15&0.01&\textbf{0.85}&1.00&0.00&0.70&0.00&\textbf{0.30}&0.64&0.12&0.28&0.18&\textbf{0.00}\\
	&0.7	&0.57&0.00&0.10&0.01&\textbf{0.89}&1.00&0.00&0.73&0.00&\textbf{0.27}&0.61&0.00&0.24&0.19&\textbf{0.00}\\
	&0.6	&0.42&0.00&0.08&0.01&\textbf{0.94}&1.00&0.00&0.65&0.00&\textbf{0.36}&0.65&0.00&0.26&0.17&\textbf{0.00}\\
	\hline
	&1	    &0.17&0.00&0.04&0.00&\textbf{1.00}&1.00&1.00&1.00&0.00&\textbf{0.00}&0.14&0.03&0.06&0.03&\textbf{0.80}\\
	&0.9	&0.36&0.00&0.06&0.00&\textbf{0.99}&1.00&0.00&0.77&0.00&\textbf{0.22}&0.45&0.14&0.25&0.17&\textbf{0.00}\\
400	&0.8	&0.63&0.00&0.14&0.01&\textbf{0.87}&1.00&0.00&0.74&0.00&\textbf{0.26}&0.67&0.01&0.26&0.20&\textbf{0.00}\\
	&0.7	&0.58&0.00&0.11&0.01&\textbf{0.90}&1.00&0.00&0.70&0.00&\textbf{0.30}&0.69&0.00&0.27&0.18&\textbf{0.00}\\
	&0.6	&0.57&0.00&0.12&0.01&\textbf{0.89}&1.00&0.00&0.67&0.00&\textbf{0.33}&0.71&0.06&0.29&0.18&\textbf{0.00}\\
	\hline
	&1	    &0.28&0.00&0.04&0.00&\textbf{1.00}&1.00&1.00&1.00&0.00&\textbf{0.00}&0.29&0.14&0.22&0.16&\textbf{0.00}\\
	&0.9	&0.43&0.00&0.07&0.00&\textbf{0.98}&1.00&0.00&0.79&0.00&\textbf{0.20}&0.57&0.13&0.27&0.18&\textbf{0.00}\\
500	&0.8	&0.66&0.00&0.13&0.01&\textbf{0.91}&1.00&0.00&0.76&0.00&\textbf{0.25}&0.66&0.00&0.22&0.21&\textbf{0.00}\\
	&0.7	&0.54&0.00&0.11&0.00&\textbf{0.92}&1.00&0.00&0.69&0.00&\textbf{0.31}&0.63&0.02&0.25&0.18&\textbf{0.00}\\
	&0.6	&0.53&0.00&0.11&0.00&\textbf{0.91}&1.00&0.00&0.67&0.00&\textbf{0.33}&0.64&0.06&0.28&0.17&\textbf{0.00}\\	
\end{tabular}
}
\end{center}
\vspace{-0.5cm}
%\caption{The mean values of $\mathbf{Vec}$ for ADMIPC, ADMM, and ADMMB when $p_0 = 0.9$ (all numbers are rounded down).}
\caption{The mean values for 5 statistics for $p_0 =0.8$}
\label{tab:exp1_p08}
\vspace{-0.45cm}
\end{table*}
\begin{table*}[th]
\begin{center}
%\resizebox{16cm}{5cm}{
{\scriptsize
\begin{tabular}{c|c| lllllll | llllllll }
\multicolumn{2}{c}{}
&\multicolumn{7}{c}{\admip}&\multicolumn{8}{c}{\rpcab}\\
\multicolumn{1}{c|}{$n$}&\multicolumn{1}{c|}{instance \#}
& $s_{\max}$ & $s_{\min}$ & $s_{\mathrm{av}}$ & $s_{\mathrm{off}}$ & $s_\mathrm{f}$ &$\mathrm{cpu}$&$\mathrm{svd}$& $s_{\max}$ & $s_{\min}$ & $s_{\mathrm{av}}$ & $s_{\mathrm{off}}$ & $s_\mathrm{f}$&$\mathrm{cpu}$&$\mathrm{svd}$& $\mathrm{iter_B}$
\\
\hline
&1&0.21&0.00&0.04&0.00&1.00&\textbf{3.8}&\textbf{30}&0.30&0.15&0.22&0.18&0.00&\textbf{13.6}&\textbf{61}&3\\
&2&0.17&0.00&0.03&0.00&1.00&\textbf{3.8}&\textbf{31}&0.27&0.12&0.21&0.18&0.00&\textbf{13.3}&\textbf{58}&3\\
500
&3&0.16&0.00&0.02&0.00&1.00&\textbf{3.9}&\textbf{33}&0.27&0.10&0.20&0.18&0.00&\textbf{13.4}&\textbf{58}&3\\
&4&0.20&0.00&0.02&0.00&1.00&\textbf{3.4}&\textbf{29}&0.31&0.12&0.21&0.18&0.00&\textbf{13.3}&\textbf{60}&3\\
&5&0.19&0.00&0.03&0.00&1.00&\textbf{3.5}&\textbf{31}&0.33&0.14&0.22&0.18&0.00&\textbf{13.8}&\textbf{60}&3\\
\hline
&1&0.57&0.00&0.05&0.01&0.98&\textbf{26.7}&\textbf{40}&0.55&0.00&0.27&0.22&0.00&\textbf{159.3}&\textbf{92}&4\\
&2&0.37&0.00&0.03&0.00&1.00&\textbf{25.7}&\textbf{37}&0.50&0.00&0.26&0.22&0.00&\textbf{163.2}&\textbf{92}&4\\
1000
&3&0.31&0.00&0.04&0.00&1.00&\textbf{25.9}&\textbf{39}&0.59&0.00&0.27&0.22&0.00&\textbf{162.2}&\textbf{92}&4\\
&4&0.34&0.00&0.04&0.01&1.00&\textbf{27.2}&\textbf{41}&0.45&0.00&0.26&0.22&0.00&\textbf{161.5}&\textbf{90}&4\\
&5&0.52&0.00&0.04&0.00&0.98&\textbf{26.0}&\textbf{38}&0.58&0.00&0.26&0.22&0.00&\textbf{162.9}&\textbf{93}&4\\
\end{tabular}%
}
\end{center}
\vspace{-0.5cm}
%\caption{The mean values of $\mathbf{Vec}$ for ADMIPC, ADMM, and ADMMB when $p_0 = 0.9$ (all numbers are rounded down).}
\caption{The cpu times, total svd numbers, and 5 statistics for $p_0 =0.9$ and $\alpha=0.95$}
\label{tab:cpu}
\vspace{-0.1cm}
\end{table*}
\begin{table*}[htb]
\begin{minipage}{\textwidth}
\begin{center}
%\resizebox{14cm}{3cm}{%
{\scriptsize
\begin{tabular}{cc| lllll | lllll}
\multicolumn{2}{c}{}
&\multicolumn{5}{c}{\texttt{Louvain}}&\multicolumn{5}{c}{\admip}\\
&\multicolumn{1}{c|}{\diagbox[width=3em]{~$\alpha$}{$n$}}
&\multicolumn{1}{c}{100} &\multicolumn{1}{c}{200} &\multicolumn{1}{c}{300} &\multicolumn{1}{c}{400} &\multicolumn{1}{c|}{500} &\multicolumn{1}{c}{100} &\multicolumn{1}{c}{200} &\multicolumn{1}{c}{300} &\multicolumn{1}{c}{400} &\multicolumn{1}{c}{500}\\
\hline
&1
&99.9    &98.9    &95.5    &90.0    &84.1    &100	  &100	  &100	   &100  	 &99.9\\
&0.9
&99.8    &93.9    &78.2    &67.4    &58.8    &100	  &100	  &100	  &99.9	 &99.8\\
$\mathrm{Jaccard}$
&0.8
&99.5    &83.9    &73.4    &67.9    &66.4    &100	 &99.9	 &99.9	  &99.9	 &99.9\\
$\mathrm{Index}$
&0.7
&98.2    &82.0    &78.5   &76.7    &76.8    &100	  &99.9	  &99.9	  &99.9	 &99.9\\
&0.6
&94.3    &86.1    &86.1    &86.9    &86.8    &99.9	  &99.9	  &99.9	 &99.9	 &99.9\\
&0.5
&94.3    &88.7    &89.9    &88.9    &87.9    &100	  &99.9	  &99.9	&99.9	&94.9\\
\hline
&1
&100    &99.8    &99.3    &98.6    &97.9   &100	  &100	  &100	   &100 	 &99.9\\
&0.9
&99.9    &98.6    &94.5    &91.2    &88.1    &100	  &100	  &100	  &99.9	 &99.8\\
$\mathrm{NMI_{SG}}$
&0.8
&99.8    &93.7    &88.9    &86.5    &85.5    &100	 &99.9	  &99.9	  &99.7	 &99.7\\
&0.7
&99.0    &89.4    &87.2    &86.2    &86.2    &100	  &99.8	  &99.7	  &99.7	 &99.7\\
&0.6
&95.6    &88.0    &87.9    &88.0    &87.8    &99.9	  &99.8	  &99.8	  &99.8	 &99.8\\
&0.5
&93.1    &87.2    &88.0    &86.8    &86.2    &100 	&99.8	 &99.8	  &99.9	    &99.9\\
\hline
&1
&99.9    &98.8    &95.0    &89.1    &82.6    &100	  &100	  &100	   &100 	&99.8\\
&0.9
&99.7    &90.1    &51.8    &32.1    &18.1    &100	  &100	  &100	   &98.5	&94.8\\
$\mathrm{PERC}$
&0.8
&99.2    &49.7    &20.3    &9.95    &6.30    &100	  &99.5	  &98.6	    &86.0	 &88.4\\
&0.7
&92.9    &27.4    &9.50    &8.37    &8.38    &100	  &96.5	   &88.6  	&86.4	 &85.0\\
&0.6
&65.3    &15.0    &10.5    &10.7    &10.3    &99   	  &94	   &90.8 	&91.6	 &91.1\\
&0.5
&42.5    &15.3    &13.1    &12.3    &13.4    &100	  &93.7	   &93.3 	&93.3	 &86.1\\
\end{tabular}
\caption{The mean values for 3 measures in $\%$ when $p_0=1$}
\label{tab:exp2_p1}
\vspace{1cm}
\begin{tabular}{cc| lllll | lllll}
\multicolumn{2}{c}{}
&\multicolumn{5}{c}{\texttt{Louvain}}&\multicolumn{5}{c}{\admip}\\
&\multicolumn{1}{c|}{\diagbox[width=3em]{~$\alpha$}{$n$}}
&\multicolumn{1}{c}{100} &\multicolumn{1}{c}{200} &\multicolumn{1}{c}{300} &\multicolumn{1}{c}{400} &\multicolumn{1}{c|}{500} &\multicolumn{1}{c}{100} &\multicolumn{1}{c}{200} &\multicolumn{1}{c}{300} &\multicolumn{1}{c}{400} &\multicolumn{1}{c}{500}\\
\hline
&1
&99.9    &99.1    &96.5    &92.8    &85.0    &100	  &100	  &100	  &100  &99.9\\
&0.9
&99.9    &95.0    &78.4    &67.4    &59.7    &100	  &100 	   &99.9 	&99.9	&99.7\\
$\mathrm{Jaccard}$
&0.8
&99.7    &85.0    &73.6    &68.7    &67.5    &100	  &99.9	  &99.9	  &99.9	 &99.9\\
$\mathrm{Index}$
&0.7
&98.4    &83.4    &78.1    &77.8    &77.3    &99.9	  &99.9	   &99.9     &99.9	&99.9\\
&0.6
&93.6    &85.9    &86.8    &87.0    &86.9    &99.9	  &99.9	   &99.9 	&99.9	 &99.9\\
&0.5
&94.6    &90.2    &90.6    &88.7    &88.1    &100	  &99.9	   &99.9 	&99.9	 &99.9\\
\hline
&1
&99.9    &99.8    &99.5    &99.0    &98.1    &100	  &100	  &100	   &100  	 &99.9\\
&0.9
&99.9    &98.8    &94.4    &91.1    &88.1    &100	  &100	  &99.9	  &99.9	&99.6\\
$\mathrm{NMI_{SG}}$
&0.8
&99.9    &93.9    &88.6    &86.5    &85.8    &100	  &99.9	  &99.8	  &99.6	 &99.6\\
&0.7
&99.1    &89.9    &87.0    &86.4    &86.3    &99.9	 &99.7	   &99.6 	&99.7	 &99.7\\
&0.6
&95.2    &87.7    &88.1    &87.9    &87.9    &99.9	 &99.6	   &99.7 	&99.8	 &99.8\\
&0.5
&93.3    &88.2    &88.5    &86.6    &86.3    &100	 &99.7	   &99.8 	&99.8	 &99.9\\
\hline
&1
&99.9    &99.0    &96.2    &92.2    &83.3    &100	  &100	  &100	   &100  	 &99.4\\
&0.9
&99.9    &91.1    &51.8    &30.9    &17.2    &100	  &100	  &99.3	  &97.5	 &90.2\\
$\mathrm{PERC}$
&0.8
&99.5    &50.1    &17.1    &10.3    &5.73    &100	  &98	   &93.3  	&81.8	 &84.2\\
&0.7
&93.3    &26.0    &7.97    &7.40    &7.51    &99.0 	  &93.0	   &87.0	  &84.1	 &82.9\\
&0.6
&63.5    &13.0    &9.83    &9.31    &10.0    &99	      &90.5	   &87.5  	&91.2	 &87.6\\
&0.5
&43.2    &16.6    &13.8    &11.5    &13.1    &100	  &91.2	   &90.5 	&92.7	 &90\\
\end{tabular}
\caption{The mean values for 3 measures in $\%$ when $p_0=0.9$}
\label{tab:exp2_p09}
\vspace{1cm}
\begin{tabular}{cc| lllll | lllll}
\multicolumn{2}{c}{}
&\multicolumn{5}{c}{\texttt{Louvain}}&\multicolumn{5}{c}{\admip}\\
&\multicolumn{1}{c|}{\diagbox[width=3em]{~$\alpha$}{$n$}}
&\multicolumn{1}{c}{100} &\multicolumn{1}{c}{200} &\multicolumn{1}{c}{300} &\multicolumn{1}{c}{400} &\multicolumn{1}{c|}{500} &\multicolumn{1}{c}{100} &\multicolumn{1}{c}{200} &\multicolumn{1}{c}{300} &\multicolumn{1}{c}{400} &\multicolumn{1}{c}{500}\\
\hline
&1
&99.9    &99.4    &97.4    &93.7    &86.5    &100	  &100	  &100	   &100 	 &99.8\\
&0.9
&99.9    &95.9    &78.9    &66.4    &60.2    &100	  &100	   &99.9 	&99.7	&99.4\\
$\mathrm{Jaccard}$&0.8
&99.8    &85.1    &73.20    &67.5    &67.4    &100	  &99.8	    &99.8	&99.8	 &99.8\\
$\mathrm{Index}$
&0.7
&97.9    &82.3    &79.3    &75.7    &75.4    &99.9	  &99.8	   &99.9 	&99.9	 &99.9\\
&0.6
&93.4    &87.1    &87.5    &87.6    &87.1    &99.9	  &99.9	   &99.9 	&99.9	 &99.9\\
&0.5
&94.3    &88.5    &89.2    &88.8    &88.5    &99.9	  &99.9	   &99.9 	&99.9	 &99.9\\
\hline
&1
&100    &99.9    &99.6    &99.1    &98.3   &100	     &100	  &100	  &100 	&99.9\\
&0.9
&99.9    &99.0    &94.6    &90.7    &87.9    &100	  &100	   &99.9 	&99.7	 &99.2\\
$\mathrm{NMI_{SG}}$
&0.8
&99.9    &93.7    &88.4    &86.1    &85.3    &100	  &99.8	   &99.6 	&99.4	 &99.4\\
&0.7
&98.8    &89.3    &87.1    &85.7    &85.6    &99.9	  &99.7	   &99.5 	&99.5	 &99.6\\
&0.6
&95.1    &88.3    &88.2    &88.1    &87.8    &99.8	  &99.6	   &99.5 	&99.7	&99.7\\
&0.5
&93.1    &87.2    &87.5    &86.5    &86.3    &99.8	 &99.7	   &99.8 	&99.7	 &99.8\\
\hline
&1
&99.9    &99.3    &97.2   &93.2    &85.1    &100	  &100	  &100	&100  	 &98.8\\
&0.9
&99.9    &91.8    &52.1    &29.9    &14.42   &100	  &100	   &99	  &92.7	 &80.6\\
$\mathrm{PERC}$
&0.8
&99.8    &45.4    &16.9    &8.88    &5.26    &100	  &96.5	   &89.3 	&78.1	 &78.6\\
&0.7
&91.4    &24.6    &8.17    &6.61    &6.73    &99	      &93	   &84	  &77.6	 &81.1\\
&0.6
&64.8    &14.7    &9.61    &8.58    &10.08   &98  	  &89	   &80.4 	&86.2	 &84.6\\
&0.5
&42.7    &14.3    &12.0    &10.7    &12.1    &97  	  &89.3	   &88.8 	&87.7	 &90\\
\end{tabular}
\caption{The mean values for 3 measures in $\%$ when $p_0=0.8$}
\label{tab:exp2_p08}
}
\end{center}
\end{minipage}
\end{table*}
\end{document}

% End of ltexpprt.tex
%